\documentclass[12pt,english,a4paper,twoside]{article}
\usepackage[centertags]{amsmath}
\usepackage{amssymb}
\usepackage{amsthm}
\usepackage{makeidx}
\usepackage{babel,a4wide}

%
%
%
\newtheorem{theorem}{Theorem}[section]

\newtheorem{lemma}[theorem]{Lemma}
\newtheorem{proposition}[theorem]{Proposition}
\newtheorem{definition}[theorem]{Definition}
\newtheorem{remark}[theorem]{Remark}
 \numberwithin{equation}{section}

\begin{document}

\title{Appell polynomial sequences with respect to some differential operators}

\author{P. Maroni, T. A. Mesquita\footnote{Corresponding author (teresam@portugalmail.pt)}}
\date{}
\maketitle

\begin{center}
CNRS, UMR 7598, Laboratoire Jacques-Louis Lions, F-75005, Paris, France \& \\
UPMC Univ Paris 06, UMR 7598, Lab. Jacques-Louis Lions,
F-75005, Paris, France\\
maroni@ann.jussieu.fr

Instituto Superior Polit\'ecnico de Viana do Castelo \& CMUP\\
Av. do Atl\^antico, 4900-348 Viana do Castelo, Portugal\\
teresam@portugalmail.pt 

\end{center}

 \textbf{Keywords and phrases}: orthogonal polynomials,  Appell sequences, Stirling numbers, cubic decomposition, Laguerre polynomials.

 \textbf{2010 Mathematics Subject Classification}: Primary 42C05; Secondary 44A55, 16R60, 33C45, 11B73.

\begin{abstract}
We present a study of a specific kind of lowering operator, herein called $\Lambda$, which is defined as a finite sum of lowering operators, proving that this configuration can be altered, for instance, by the use of Stirling numbers.  We characterize the polynomial sequences fulfilling an Appell relation with respect to $\Lambda$, and considering a concrete cubic decomposition of  a simple Appell sequence, we prove that the polynomial component sequences are $\Lambda$-Appell, with $\Lambda$ defined as previously, although by a three term sum. Ultimately, we prove the non-existence of orthogonal polynomial sequences which are also $\Lambda$-Appell, when $\Lambda$ is the lowering operator $\Lambda=a_{0}D+a_{1}DxD+a_{2}\left(Dx\right)^2D$, where $a_{0}$, $a_{1}$ and $a_{2}$ are constants and $a_{2} \neq 0$. The case where $a_{2}=0$ and $a_{1} \neq 0$ is also naturally recaptured.
\end{abstract}

\section*{Introduction}\label{sec:introduction}

It is a well know fact that the only monic orthogonal polynomial sequence ${\{B_{n}\}}_{n \geq 0}$ satisfying the relation $DB_{n+1}(x)$ $=$ $(n+1)B_{n}(x),$ $\; n \geq 0$, for the ordinary derivative operator $D$, is the Hermite sequence, up to an affine transformation \cite{Chihara}. This last relation defines the so-called Appell sequences \cite{Appell} which are widely spread in literature, in several contexts and applications. They present a large variety of features and include other famous polynomial sequences like the Bernoulli sequence. To this matter we can consult, for instance, \cite{Quad-Appell,Interpolation, Douak-Appell-d-ortho}, among many others.\\
During an investigation based on a cubic decomposition of an Appell sequence, we find polynomial sequences fulfilling the analogous identity: 
\newline $\Lambda B_{n+1}(x)$ $=$ $\rho_{n}B_{n}(x), \; n \geq 0$, for certain lowering operators $\Lambda$, where $\rho_{n}$ are normalization constants.
The operators emerging from the context of that cubic decomposition, hold the multiplicative form  $\big(2I+3xD\big)\,D\,\big(I+3xD\big)$ or $\big(I+3xD\big)\,\big(2I+3xD\big)\,D$, that, as it is explained hereafter, it can be written as $\Lambda=a_{0}D+a_{1}DxD+a_{2}\left(Dx\right)^2D$, with constant coefficients $a_{0}$, $a_{1}$ and $a_{2}$. Indeed, in the totality of results of this work, we use different ways of expressing the same operator, and itself gained a remarkable role along the way, requiring a treatment without particular restrictions. Different versions of these operators appear often in the study of special functions, for instance, with regard to the monomiality principle (see, for example, \cite{Khan}--\cite{Srivastava-Ben cheik}).\\
For this reason, this manuscript is organized as follows. In the first section, the basic definitions and tools are given, and the second section is devoted to the study of the operator $\Lambda=\sum_{i=0}^{k}a_{i} \left(Dx\right)^{i} D$, where $k$ is a positive integer and $a_{k} \neq 0$, and to a characterization of the sequences herein called $\Lambda$-Appell. Summed up, the two first sections establish the basic ground of the upcoming results. In the third part, we consider a cubic decomposition of an Appell sequence and it is indicated what kind of Appell behaviour have the respective component sequences. The last section clarifies that we cannot find orthogonal sequences in the set of $\Lambda$-Appell sequences, for $\Lambda=a_{0}D+a_{1}DxD+a_{2}\left(Dx\right)^2D$, $a_{2} \neq 0$. This last result allow us to reformulate the case where $\Lambda=a_{0}D+a_{1}DxD$, provided earlier in \cite{QuadraticAppell} for $a_{0}= \epsilon$ and $a_{1}= 2$.

\section{Preliminaries}\label{sec:preliminaries}

\subsection{Basic definitions and notation}

Let $\mathcal{P}$ denote the vector space  of polynomials with coefficients in  $\mathbb{C}$ and let $\mathcal{P}'$ be its dual. We indicate by $\langle u,p \rangle$ the action of the form or linear functional  $u \in \mathcal{P}'$ on $p \in \mathcal{P}$.
In particular, $(u)_{n}= \langle u,x^{n} \rangle,\: n \geq 0$, are called the moments of $u$. A form $u$ is equivalent to the numerical sequence  $\{(u)_{n}\}_{n \geq 0}$.

In the sequel, we will call polynomial sequence (PS) to any sequence 
${\{B_{n}\}}_{n \geq 0}$ such that $\deg B_{n}= n,\; \forall n \geq 0$.
We will also call monic polynomial sequence (MPS) a PS such that in each polynomial the leading coefficient is equal to one. Notice that if $<u,B_{n}>=0,\; \forall n \geq 0$, then $u=0$. Given a MPS ${\{B_{n}\}}_{n \geq 0}$, there are complex sequences, ${\{\beta_{n}\}}_{n \geq 0}$ and
$\{\chi_{n,\nu}\}_{0 \leq \nu \leq n,\; n \geq 0},$ such that
\begin{align}
&B_{0}(x)=1, \;\; B_{1}(x)=x-\beta_{0},\label{divisao_ci}\\
&B_{n+2}(x)=(x-\beta_{n+1})B_{n+1}(x)-\sum_{\nu=0}^{n}\chi_{n,\nu}B_{\nu}(x).\label{divisao}
\end{align}
This relation is called the structure relation of  ${\{B_{n}\}}_{n \geq 0}$, and ${\{\beta_{n}\}}_{n \geq 0}$ and
$\{\chi_{n,\nu}\}_{0 \leq \nu \leq n,\; n \geq 0}$ are called the structure coefficients.
Moreover, there exists a unique sequence  ${\{u_{n}\}}_{n \geq 0},\;\;
u_{n} \in \mathcal{P}'$, called the dual sequence of  ${\{B_{n}\}}_{n \geq 0}$, such that
$$\langle u_{n},B_{m} \rangle= \delta_{n,m},\;\; n,m \geq 0,$$
where $ \delta_{n,m}$ denotes the Kronecker symbol. Let us remark that, if $p$ is a polynomial and $ \langle u_{n},p \rangle=0,\; \forall n \geq 0$, then $p=0$.
Besides, it is well known that, \cite{variations}
\begin{align}
&\label{betas}\beta_{n}= \langle u_{n},xB_{n}(x) \rangle,\:\: n \geq 0,\\
&\label{quisnus}\chi_{n,\nu}= \langle u_{\nu},xB_{n+1}(x) \rangle,\:\: 0\leq \nu \leq n,\:\: n \geq 0.
\end{align}

\begin{lemma}\cite{variations}\label{lema1}
For each $u \in \mathcal{P}^{\prime}$ and each $m \geq 1$, the two following propositions are equivalent.
\begin{description}
\item[a]  $\langle u, B_{m-1} \rangle \neq 0,\:\: \langle u, B_{n} \rangle =0,\: n \geq m$.
\item[b] $\exists \lambda_{\nu} \in \mathbb{C},\:\: 0 \leq \nu \leq m-1,\:\: \lambda_{m-1}\neq 0$ such that
$u=\displaystyle\sum_{\nu=0}^{m-1}\lambda_{\nu} u_{\nu}$, with $\lambda_{\nu} = \langle u, B_{\nu} \rangle.$ \end{description}
\end{lemma}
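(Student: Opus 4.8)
The plan is to establish the two implications separately, using nothing beyond the defining property $\langle u_{n},B_{m}\rangle=\delta_{n,m}$ of the dual sequence and the completeness remark recalled above, namely that a form annihilating every $B_{n}$ must be the zero form.

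For the implication \textbf{b} $\Rightarrow$ \textbf{a} I would simply substitute. Writing $u=\sum_{\nu=0}^{m-1}\lambda_{\nu}u_{\nu}$ with $\lambda_{m-1}\neq 0$ and pairing with $B_{n}$ gives $\langle u,B_{n}\rangle=\sum_{\nu=0}^{m-1}\lambda_{\nu}\delta_{\nu,n}$. For $n\geq m$ every index $\nu\leq m-1<n$, so the sum vanishes; for $n=m-1$ only the term $\nu=m-1$ survives, yielding $\langle u,B_{m-1}\rangle=\lambda_{m-1}\neq 0$. This is exactly assertion \textbf{a}, and the same computation confirms $\lambda_{\nu}=\langle u,B_{\nu}\rangle$ for $0\leq\nu\leq m-1$.

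For \textbf{a} $\Rightarrow$ \textbf{b} the key step is to introduce the auxiliary form
$$v=u-\sum_{\nu=0}^{m-1}\langle u,B_{\nu}\rangle\,u_{\nu},$$
which is a legitimate element of $\mathcal{P}^{\prime}$, being a finite combination of forms, and then to show that $v$ annihilates the whole basis $\{B_{k}\}_{k\geq 0}$. For $0\leq k\leq m-1$ one gets $\langle v,B_{k}\rangle=\langle u,B_{k}\rangle-\langle u,B_{k}\rangle=0$ by $\langle u_{\nu},B_{k}\rangle=\delta_{\nu,k}$; for $k\geq m$ the sum contributes nothing (all $\nu\leq m-1<k$) and $\langle u,B_{k}\rangle=0$ by hypothesis \textbf{a}, so again $\langle v,B_{k}\rangle=0$. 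Hence $v=0$, i.e. $u=\sum_{\nu=0}^{m-1}\lambda_{\nu}u_{\nu}$ with $\lambda_{\nu}=\langle u,B_{\nu}\rangle$, and the requirement $\lambda_{m-1}\neq 0$ is precisely the first half of \textbf{a}.

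The argument is elementary; the only points needing care are the bookkeeping of the index ranges — checking that the truncated sum reproduces $\langle u,B_{k}\rangle$ for $k\leq m-1$ and contributes nothing for $k\geq m$ — and the appeal, at the end of the second implication, to the completeness statement ``$\langle w,B_{n}\rangle=0$ for all $n$ implies $w=0$'', which upgrades the vanishing of $v$ on the basis to the genuine identity of forms. No substantial obstacle is expected.
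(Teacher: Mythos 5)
Your proof is correct and is the standard argument (it is essentially the one in the cited reference \cite{variations}); the paper itself states this lemma without proof, simply citing that source. Both implications are handled properly, and your appeal to the completeness remark ``$\langle w,B_{n}\rangle=0$ for all $n\geq 0$ implies $w=0$'' — which the paper records in its preliminaries — is exactly the ingredient needed to conclude $v=0$.
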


A linear operator $T:\mathcal{P} \rightarrow \mathcal{P}$ has a transpose  $^{\textrm{t}}T:\mathcal{P}^{\prime} \rightarrow \mathcal{P}^{\prime}$ defined by
$$\langle ^{\textrm{t}}T(u), p \rangle =  \langle u, T(p) \rangle,\hspace{0.4cm} u \in \mathcal{P}^{\prime} , \:p \in \mathcal{P}.$$

Therefore, given $\varpi \in \mathcal{P}$ and $u \in \mathcal{P}'$, the form $\varpi u$, called the
left-multiplication of $u$ by the polynomial $\varpi$, is defined by $\langle \varpi u,p \rangle= \langle u,\varpi p \rangle,\:\:\: \forall p \in \mathcal{P},$ and the transpose of the derivative operator on $\mathcal{P}$ defined by $p \rightarrow (Dp)(x)=p^{\prime}(x),$ is the following (see \cite{theoriealgebrique}).
\begin{equation}\label{funcionalDu}
u \rightarrow Du:\;\;\langle Du,p \rangle=-\langle u,p^{\prime} \rangle ,\:\:\: \forall p \in \mathcal{P}.\end{equation}
Hence, it is easily established that 
\begin{align}
\label{derivada do produto} &D(pu)=p^{\prime}u+pD(u);\\
\label{DpD^k} &D\big(p D^k\big)=p^{\prime}D^k+pD^{k+1}.
\end{align}

\begin{definition} \cite{theoriealgebrique,euler} \label{orthogonality definition}
A PS ${\{B_{n}\}}_{n \geq 0}$ is regularly orthogonal with respect to the form $u$ if and only if it fulfils 
\begin{align}
\label{ortogonal} &\langle u,B_{n}B_{m} \rangle=0,\:\: n \neq m,\:\:\:\:\: n, m \geq 0,\\
\label{ortogonal regular} & \langle u,B_{n}^{2} \rangle \neq 0, \: n \geq 0.
\end{align}
Then, the form $u$ is said to be regular (or quasi-definite) and ${\{B_{n}\}}_{n \geq 0}$ is an orthogonal polynomial sequence (OPS). The conditions (\ref{ortogonal}) are called the orthogonality conditions and the conditions (\ref{ortogonal regular}) are called the regularity conditions.
\end{definition}
We can normalize ${\{B_{n}\}}_{n \geq 0}$ in order that it becomes monic; then it is unique and we note it as a MOPS. Considering ${\{u_{n}\}}_{n \geq 0}$ the corresponding dual sequence, it holds $u=\lambda u_{0},$ with $\lambda=(u)_{0} \neq 0$.
\begin{lemma}\cite{euler}\label{phi-u=0}
Let $u$ be a regular form and $\phi$ a polynomial such that $\phi u=0$. Then $\phi=0$.
\end{lemma}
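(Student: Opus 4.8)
The plan is to argue by contradiction by pairing the relation $\phi u = 0$ with the monic orthogonal sequence attached to $u$. Suppose $\phi \neq 0$ and put $d = \deg \phi \geq 0$. Since $u$ is regular it possesses a MOPS ${\{B_{n}\}}_{n \geq 0}$ (with $u = \lambda u_{0}$, $\lambda = (u)_{0} \neq 0$), and as ${\{B_{n}\}}_{n \geq 0}$ is a basis of $\mathcal{P}$ made of monic polynomials, we may write
\begin{equation*}
\phi = \sum_{k=0}^{d} \lambda_{k} B_{k}, \qquad \lambda_{d} \neq 0,
\end{equation*}
where $\lambda_{d}$ equals the leading coefficient of $\phi$.

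The next step is to evaluate the identity $\phi u = 0$ on the test polynomial $B_{d}$. Using the definition of the left-multiplication, $\langle \phi u, p \rangle = \langle u, \phi p \rangle$, together with the expansion above,
\begin{equation*}
0 = \langle \phi u, B_{d} \rangle = \langle u, \phi B_{d} \rangle = \sum_{k=0}^{d} \lambda_{k} \langle u, B_{k} B_{d} \rangle = \lambda_{d} \langle u, B_{d}^{2} \rangle ,
\end{equation*}
since the orthogonality conditions (\ref{ortogonal}) annihilate every term with $k < d$. The regularity conditions (\ref{ortogonal regular}) give $\langle u, B_{d}^{2} \rangle \neq 0$, hence $\lambda_{d} = 0$, contradicting $\lambda_{d} \neq 0$; therefore $\phi = 0$.

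I do not expect a genuine obstacle here: the only point requiring some care is to keep the argument self-contained, that is, to recall that a regular form carries an associated MOPS which is a basis of $\mathcal{P}$, and then to read off the leading coefficient from the expansion. If one prefers to stay at the level of moments, an equivalent route is to notice that $\phi u = 0$ with $\phi(x) = \sum_{j=0}^{d} \phi_{j} x^{j}$, $\phi_{d} \neq 0$, forces the homogeneous system $\sum_{j=0}^{d} \phi_{j} (u)_{i+j} = 0$ for $0 \leq i \leq d$, whose coefficient matrix is the Hankel matrix $\big[(u)_{i+j}\big]_{0 \leq i,\, j \leq d}$; regularity makes this matrix invertible, so $(\phi_{0}, \dots, \phi_{d}) = 0$, again a contradiction.
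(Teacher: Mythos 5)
Your argument is correct and is precisely the standard proof of this lemma (the paper only cites it from \cite{euler} without reproducing a proof): expand $\phi=\sum_{k=0}^{d}\lambda_{k}B_{k}$ in the MOPS associated with the regular form $u$, test $\phi u=0$ against $B_{d}$, and use the orthogonality conditions (\ref{ortogonal}) together with the regularity conditions (\ref{ortogonal regular}) to force $\lambda_{d}=0$, contradicting $\deg\phi=d$. Your alternative route via the invertibility of the Hankel matrix $\big[(u)_{i+j}\big]_{0\leq i,j\leq d}$ is an equally valid, equivalent formulation of regularity.
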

\begin{theorem}\cite{variations}\label{regular}
Let ${\{B_{n}\}}_{n \geq 0}$ be a MPS and ${\{u_{n}\}}_{n \geq 0}$ its dual sequence. 
The following statements are equivalent:
\begin{description}
  \item[a]  The sequence ${\{B_{n}\}}_{n \geq 0}$ is orthogonal (with respect to $u_{0}$);
  \item[b] $\chi_{n,k}=0$,  $\;0\leq k \leq n-1,\;\;\;\; n\geq 1; \;\;\;\chi_{n,n}\neq 0,\; \; n \geq
  0$;
  \item[c]$xu_{n}=u_{n-1}+\beta_{n}u_{n}+\chi_{n,n}u_{n+1}$, $\chi_{n,n}\neq
  0,\;\;\; n \geq 0$, $u_{-1}=0$;
  \item[d] For each $n \geq 0$, there is a polynomial  $\phi_{n}$ with $\deg(\phi_{n})=n$ such that $u_{n}=\phi_{n}u_{0}$;
  \item[e] $u_{n}=\Big(<u_{0},B_{n}^{2}>\Big)^{-1}B_{n}u_{0}$, $n \geq 0$;
\end{description}
where $\beta_{n}$ and $\chi_{n,k}$ are defined by (\ref{betas}-\ref{quisnus}).
\end{theorem}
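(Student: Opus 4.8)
The plan is to run a single cycle of implications $\mathbf{a}\Rightarrow\mathbf{e}\Rightarrow\mathbf{b}\Rightarrow\mathbf{c}\Rightarrow\mathbf{d}\Rightarrow\mathbf{a}$, which makes the five statements equivalent. The computation underlying several of these steps is a ``dualized'' structure relation valid for an \emph{arbitrary} MPS: from (\ref{divisao}), an index shift yields $xB_{m}(x)=B_{m+1}(x)+\beta_{m}B_{m}(x)+\sum_{\nu=0}^{m-1}\chi_{m-1,\nu}B_{\nu}(x)$ for $m\geq 1$ (the sum being empty for $m=0$, where $xB_{0}=B_{1}+\beta_{0}B_{0}$), and since the coefficient of $u_{m}$ in the expansion of any $v\in\mathcal{P}^{\prime}$ on the dual basis is $\langle v,B_{m}\rangle$, taking $v=xu_{n}$ and using $\langle xu_{n},B_{m}\rangle=\langle u_{n},xB_{m}\rangle$ one reads off
\begin{equation}\label{master}
xu_{n}=u_{n-1}+\beta_{n}u_{n}+\sum_{m\geq n+1}\chi_{m-1,n}\,u_{m},\qquad n\geq 0,\ \ u_{-1}=0,
\end{equation}
the (a priori infinite) sum being a legitimate element of $\mathcal{P}^{\prime}$, since only finitely many of its terms act nontrivially on any given polynomial. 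Formula (\ref{master}) is the bridge between the structure coefficients and the recurrence satisfied by the dual sequence.

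For $\mathbf{a}\Rightarrow\mathbf{e}$ I would set $v_{n}=\big(\langle u_{0},B_{n}^{2}\rangle\big)^{-1}B_{n}u_{0}$, which is well defined by (\ref{ortogonal regular}); then $\langle v_{n},B_{m}\rangle=\langle u_{0},B_{n}B_{m}\rangle/\langle u_{0},B_{n}^{2}\rangle=\delta_{n,m}$ by (\ref{ortogonal}), so $v_{n}=u_{n}$ by uniqueness of the dual sequence, which is $\mathbf{e}$. Conversely $\mathbf{e}$ returns $\mathbf{a}$ at once, since $\delta_{n,m}=\langle u_{n},B_{m}\rangle=\langle u_{0},B_{n}B_{m}\rangle/\langle u_{0},B_{n}^{2}\rangle$ gives (\ref{ortogonal})--(\ref{ortogonal regular}). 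Using this, for $\mathbf{e}\Rightarrow\mathbf{b}$ I would combine (\ref{quisnus}) with $u_{\nu}=\big(\langle u_{0},B_{\nu}^{2}\rangle\big)^{-1}B_{\nu}u_{0}$ to obtain $\chi_{n,\nu}=\langle u_{0},(xB_{\nu})B_{n+1}\rangle/\langle u_{0},B_{\nu}^{2}\rangle$; for $0\leq\nu\leq n-1$ the polynomial $xB_{\nu}$ has degree $\leq n$, hence lies in $\mathrm{span}\{B_{0},\dots,B_{n}\}$ and is $u_{0}$-orthogonal to $B_{n+1}$, so $\chi_{n,\nu}=0$, while for $\nu=n$ writing $xB_{n}=B_{n+1}+(\text{lower order})$ gives $\chi_{n,n}=\langle u_{0},B_{n+1}^{2}\rangle/\langle u_{0},B_{n}^{2}\rangle\neq0$.

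For $\mathbf{b}\Rightarrow\mathbf{c}$ I would substitute $\mathbf{b}$ into (\ref{master}): in the sum the coefficient $\chi_{m-1,n}$ vanishes as soon as $n\leq m-2$, whereas the term $m=n+1$ contributes $\chi_{n,n}\neq0$, so (\ref{master}) collapses to $xu_{n}=u_{n-1}+\beta_{n}u_{n}+\chi_{n,n}u_{n+1}$, which is $\mathbf{c}$. For $\mathbf{c}\Rightarrow\mathbf{d}$, solving $\mathbf{c}$ for the top term gives $u_{n+1}=\chi_{n,n}^{-1}\big((x-\beta_{n})u_{n}-u_{n-1}\big)$; arguing by induction from $u_{0}=1\cdot u_{0}$ and using $\varpi(\psi u_{0})=(\varpi\psi)u_{0}$ for left multiplication, if $u_{n}=\phi_{n}u_{0}$ and $u_{n-1}=\phi_{n-1}u_{0}$ with $\deg\phi_{n}=n$, $\deg\phi_{n-1}=n-1$, then $u_{n+1}=\phi_{n+1}u_{0}$ with $\phi_{n+1}=\chi_{n,n}^{-1}\big((x-\beta_{n})\phi_{n}-\phi_{n-1}\big)$, of degree exactly $n+1$ precisely because $\chi_{n,n}\neq0$. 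Finally, for $\mathbf{d}\Rightarrow\mathbf{a}$, since $\deg\phi_{n}=n$ for all $n$ the family $\{\phi_{n}\}_{n\geq0}$ is a basis of $\mathcal{P}$, so $B_{i}=\sum_{k=0}^{i}c_{k}\phi_{k}$ with $c_{i}\neq0$ (comparing leading coefficients, $B_{i}$ being monic); then $\langle u_{0},B_{i}B_{j}\rangle=\sum_{k=0}^{i}c_{k}\langle u_{0},\phi_{k}B_{j}\rangle=\sum_{k=0}^{i}c_{k}\langle u_{k},B_{j}\rangle=\sum_{k=0}^{i}c_{k}\delta_{k,j}$, which is $0$ for $j>i$ and equals $c_{i}\neq0$ for $j=i$, i.e.\ (\ref{ortogonal})--(\ref{ortogonal regular}).

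The only genuinely delicate point I anticipate is the derivation and bookkeeping of (\ref{master}): one must make precise that the infinite sum is a bona fide functional and, above all, that it truncates to the single term $\chi_{n,n}u_{n+1}$ exactly under hypothesis $\mathbf{b}$, and one must track carefully throughout the non-vanishing conditions ($\chi_{n,n}\neq0$, nonzero leading coefficients) that separate genuine orthogonality from the merely formal identities. Everything else reduces to linear algebra on $\mathcal{P}$ and on the dual sequence.
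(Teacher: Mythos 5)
Your proof is correct, and since the paper only cites this theorem from \cite{variations} without reproducing a proof, there is nothing internal to compare it against; your cycle $\mathbf{a}\Rightarrow\mathbf{e}\Rightarrow\mathbf{b}\Rightarrow\mathbf{c}\Rightarrow\mathbf{d}\Rightarrow\mathbf{a}$, pivoting on the dualized structure relation $xu_{n}=u_{n-1}+\beta_{n}u_{n}+\sum_{m\geq n+1}\chi_{m-1,n}u_{m}$, is essentially the standard argument of the cited reference. All the delicate points you flag (the weak convergence of the expansion of $xu_{n}$ on the dual basis, its truncation under $\mathbf{b}$, and the role of $\chi_{n,n}\neq 0$ in keeping $\deg\phi_{n+1}=n+1$) are handled correctly.
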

Let ${\{B_{n}\}}_{n \geq 0}$ be a MOPS. From statement $b)$ of Theorem \ref{regular}, the structure relation (\ref{divisao})
becomes the following second order recurrence relation: 
\begin{align}\label{recurrencia de ordem dois}
&B_{0}(x)=1,\;\;B_{1}(x)=x-\beta_{0},\\
&B_{n+2}(x)=(x-\beta_{n+1})B_{n+1}(x)-\gamma_{n+1}B_{n}(x),\;\;\; n \geq 0,
\end{align}
where $\gamma_{n+1}=\chi_{n,n} \neq 0,\;\; n \geq 0$, and also by item $e)$, we have:
\begin{equation}\label{betas e gammas}
\beta_{n}=\frac{\langle u_{0}, xB_{n}^{2}(x) \rangle}{ \langle u_{0}, B_{n}^{2}(x) \rangle},\;\;\: \:\:\:\gamma_{n+1}=\frac{ \langle u_{0}, B_{n+1}^{2}(x) \rangle}{ \langle u_{0}, B_{n}^{2}(x) \rangle}.
\end{equation}
Note that the regularity conditions (\ref{ortogonal regular}) are fulfilled if and only if $\gamma_{n+1} \neq 0,\:\: n \geq 0$.
\medskip
\newline
Finally, we recall that a MPS ${\{B_{n}\}}_{n \geq 0}$ is called classical, if and only if it satisfies the Hahn\'{}s property \cite{Hahn}, that is to say, the derivative MPS ${\{B^{[1]}_{n}\}}_{n \geq 0}$, $B^{[1]}_{n}(x):=(n+1)^{-1}DB_{n+1}(x)$, is also orthogonal. The classical polynomials are divided in four classes: Hermite, Laguerre, Bessel and Jacobi \cite{Chihara,euler}, and characterized by the functional equation
$$D(\phi u)+\psi u =0,$$
where $\psi$ and $\phi$ are two polynomials such that: $\deg \psi =1$, $\deg \phi \leq 2$ and $\psi^{\prime}-\frac{1}{2}\phi^{\prime\prime}n \neq 0,\: n \geq 1$. For example, the polynomials $\phi(x)=x$ and $\psi(x)=x-\alpha-1$, with parameter $\alpha \notin \mathbb{Z}^{-}$, correspond to the Laguerre polynomials.


\subsection{Lowering operators and Appell sequences}

An Appell sequence $\{B_{n}\}_{n \geq 0}$ is usually defined by the condition \cite{Appell} 
$$\displaystyle B^{[1]}_{n}(x) = \frac{1}{n+1}DB_{n+1}(x) =B_{n}(x), \; n \geq 0.$$
The derivation operator is an example of an usually called lowering operator, that is to say,  is a linear mapping $\mathcal{O}: \mathcal{P} \rightarrow \mathcal{P}$ fulfilling the two conditions: 
\newline\begin{center}
$\mathcal{O}(1)=0$ and $\deg\left(\mathcal{O}\left(x^{n}\right)\right)=n-1,\; n \geq 1$. 
\end{center}
More generally, for any lowering operator $\mathcal{O}$, we can construct the sequence  $\{B^{\left[1\right]}_{n}\left(\cdot\,; \mathcal{O} \right)_{n}\}_{n \geq 0}$, defined by 
$$B^{\left[1\right]}_{n}\left(x; \mathcal{O} \right)=\rho_{n}^{-1} \left( \mathcal{O}B_{n+1}\right)(x),\; n \geq 0,$$
where $\rho_{n} \in \mathbb{C}\backslash \{0\}$ is chosen in order to make $B^{\left[1\right]}_{n}\left(x; \mathcal{O} \right)$ monic, and attend to the next definition.
\begin{definition}\cite{ Ben cheik monomiality 2, Ben cheik - Chaggara}
A MPS $\{B_{n}\}_{n \geq 0}$ is called an $\mathcal{O}$-Appell sequence with respect to a lowering operator $\mathcal{O}$ if $B_{n}(\cdot)=B_{n}^{[1]}\left(\cdot\, ; \mathcal{O}\right)$ for all integers $n \geq 0$.
\end{definition}
In a forthcoming section, it will be important to understand the application of a lowering operator to a sequence of polynomials $\{\zeta_{n}\}_{n \geq 0}$ which does not necessarily fulfils all of the MPS attributes, as for instance, when $\deg \zeta_{n} < n$ for some values of $n$. In that situation, we  can guarantee a structure to those sequences when some hypotheses are taken, as the next Proposition \ref{O-Appell sequences structure} announces. In its proof, it will be useful the following trivial result.
\begin{lemma}\label{basic lemma lower operator}
Let $\mathcal{O}$ be a lowering operator and  $f \in \mathcal{P}$. If $\deg\left( f \right)>0$, then $\deg\left(\mathcal{O}\left( f \right)\right)=\deg\left( f \right)-1$.
\end{lemma}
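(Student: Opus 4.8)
The plan is to reduce everything to the monomial basis and exploit linearity together with the two defining properties of a lowering operator. Write $m = \deg(f) \geq 1$ and expand $f(x) = \sum_{k=0}^{m} c_k x^k$ with $c_m \neq 0$. Applying $\mathcal{O}$ and using linearity gives $\mathcal{O}(f) = \sum_{k=0}^{m} c_k \,\mathcal{O}(x^k)$, and the $k=0$ term drops out because $\mathcal{O}(1) = 0$, leaving $\mathcal{O}(f) = \sum_{k=1}^{m} c_k \,\mathcal{O}(x^k)$.

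Next I would track degrees term by term. By the defining property $\deg(\mathcal{O}(x^k)) = k-1$ for each $k \geq 1$; in particular the highest-degree contribution comes from $k = m$, namely $c_m \,\mathcal{O}(x^m)$, which has degree exactly $m-1$ since $c_m \neq 0$. Every other summand $c_k \,\mathcal{O}(x^k)$ with $1 \leq k \leq m-1$ has degree $k-1 \leq m-2$, strictly below $m-1$. Therefore no cancellation can affect the coefficient of $x^{m-1}$ in $\mathcal{O}(f)$: it equals $c_m$ times the (nonzero) leading coefficient of $\mathcal{O}(x^m)$, hence is nonzero, while all terms of degree $\geq m$ are absent. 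This yields $\deg(\mathcal{O}(f)) = m-1 = \deg(f) - 1$, as claimed.

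There is no real obstacle here; the only point requiring the slightest care is to notice that the subdominant monomials $x^k$, $k < m$, cannot conspire to raise the degree (they cannot, since $\mathcal{O}(x^k)$ has degree $< m-1$) nor to kill the leading term (they cannot reach degree $m-1$), so the argument is purely a bookkeeping of degrees. The statement does not even require $\mathcal{O}$ to be a derivation or to satisfy any Leibniz-type rule — only linearity, $\mathcal{O}(1)=0$, and $\deg(\mathcal{O}(x^n)) = n-1$ — which is exactly why the paper labels it trivial.
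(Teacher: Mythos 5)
Your proof is correct and follows essentially the same route as the paper: expand $f$ in the monomial basis, use linearity and $\mathcal{O}(1)=0$, and observe that the leading term $c_m\,\mathcal{O}(x^m)$ contributes a nonzero coefficient in degree $m-1$ that the lower-order terms cannot reach or cancel. Nothing further is needed.
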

\begin{proof}
Let us consider $f \in \mathcal{P}$ such that $\deg\left( f \right)>0$. Then, $f=\displaystyle \sum_{i=0}^{k} a_{i}x^{i}$ with $a_{k} \neq 0$, $k \geq 1$, and
$$\mathcal{O}\left( f \right)=  \displaystyle \sum_{i=0}^{k} a_{i}\mathcal{O} \left(x^{i} \right)= \displaystyle \sum_{i=1}^{k} a_{i} \left(   \displaystyle \sum_{j=0}^{i-1}  \theta_{i,j} x^{j}\right)=\displaystyle \sum_{j=0}^{k-1} \left(   \displaystyle \sum_{i=j+1}^{k}  a_{i} \theta_{i,j}\right)  x^{j}.$$
In particular, the coefficient of $x^{k-1}$ is $a_{k}\theta_{k,k-1} \neq 0$.
\end{proof}
\begin{proposition}\label{O-Appell sequences structure}
Let $\mathcal{O}$ be a lowering operator and let $\{ f_{n} \}_{n \geq 0}$ be a sequence in $\mathcal{P}$ such that $\rho_{n} f_{n}=\mathcal{O}\left(f_{n+1}\right)$, with $\rho_{n} \in \mathbb{C}\backslash \{0\}$. Then, either $f_{n}=0,\, n \geq 0$, or there is $n_{0} \geq 0$ such that $f_{n_{0}} \neq 0$, $f_{i}=0, \, 0 \leq i \leq n_{0}-1$, if $n_{0} \geq 1$, and $\deg \left(f_{n+1} \right)=\deg \left(f_{n} \right)+1,\, n \geq n_{0}$.
\end{proposition}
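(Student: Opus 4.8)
The plan is to dispose of the trivial alternative first and then isolate the smallest index at which the sequence fails to vanish. Suppose that it is not the case that $f_n = 0$ for all $n \geq 0$; then the set $\{ n \geq 0 : f_n \neq 0 \}$ is nonempty, so it has a minimum, call it $n_0$. By minimality, $f_{n_0} \neq 0$ and $f_i = 0$ for $0 \leq i \leq n_0 - 1$ whenever $n_0 \geq 1$, which is exactly the claimed dichotomy up to the degree-growth assertion; it remains to prove $\deg f_{n+1} = \deg f_n + 1$ for all $n \geq n_0$.

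For the degree statement I would argue by induction on $n \geq n_0$. The relation $\rho_n f_n = \mathcal{O}(f_{n+1})$ with $\rho_n \neq 0$ forces $\mathcal{O}(f_{n+1}) \neq 0$ whenever $f_n \neq 0$, and since $\mathcal{O}(1) = 0$ and, more generally, $\mathcal{O}$ maps constants to $0$, this immediately gives $\deg f_{n+1} \geq 1$, hence $\deg f_{n+1} > 0$, whenever $f_n \neq 0$. Starting from $f_{n_0} \neq 0$, this propagates: if $f_n \neq 0$, then $\mathcal{O}(f_{n+1}) = \rho_n f_n \neq 0$ so $f_{n+1} \neq 0$, and by induction $f_n \neq 0$ for all $n \geq n_0$. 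Now for each such $n$ we have $\deg f_{n+1} > 0$, so Lemma \ref{basic lemma lower operator} applies and yields $\deg\left(\mathcal{O}(f_{n+1})\right) = \deg f_{n+1} - 1$. On the other hand $\mathcal{O}(f_{n+1}) = \rho_n f_n$ with $\rho_n \neq 0$, so $\deg\left(\mathcal{O}(f_{n+1})\right) = \deg f_n$. Comparing the two expressions gives $\deg f_{n+1} - 1 = \deg f_n$, i.e. $\deg f_{n+1} = \deg f_n + 1$, for every $n \geq n_0$, which completes the induction and the proof.

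The only genuine subtlety — and the one place where care is needed — is the bootstrapping that guarantees $f_n \neq 0$ for \emph{all} $n \geq n_0$ rather than merely at isolated indices; this is what lets Lemma \ref{basic lemma lower operator} be invoked at every step. That bootstrapping rests squarely on $\rho_n \neq 0$, which prevents the relation $\rho_n f_n = \mathcal{O}(f_{n+1})$ from ever collapsing a nonzero $f_n$ back to zero on the left-hand side. Everything else is a routine comparison of degrees through the lowering operator, so I do not anticipate any real obstacle beyond phrasing the induction cleanly.
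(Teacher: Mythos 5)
Your argument is correct and follows essentially the same route as the paper's: isolate the minimal nonzero index $n_{0}$, observe that $\rho_{n}f_{n}\neq 0$ forces $\deg f_{n+1}>0$, apply Lemma \ref{basic lemma lower operator}, and induct. Your explicit bootstrapping that $f_{n}\neq 0$ for all $n\geq n_{0}$ is a welcome clarification of what the paper compresses into ``by finite induction, using the same arguments.''
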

\begin{proof}
Let $n_{0}$ be the smallest index of a nonzero element of $\{ f_{n} \}_{n \geq 0}$, that is, $f_{n_{0}} \neq 0$ and $f_{i}=0, \,0 \leq i \leq n_{0}-1$, if $n_{0} \geq 1$.  Then, $\rho_{n_{0}} f_{n_{0}}= \mathcal{O}\left(f_{n_{0}+1}\right)$. 
Obviously,  $\deg \left( f_{n_{0}+1} \right) > 0$, otherwise we would have $f_{n_{0}} =0$.
Thus, by Lemma \ref{basic lemma lower operator}, $\deg \left( f_{n_{0}} \right)=\deg \left( \mathcal{O} \left( f_{n_{0}+1} \right)\right)= \deg \left( f_{n_{0}+1} \right)-1$ and $\deg \left( f_{n_{0}+1} \right)=\deg \left( f_{n_{0}} \right)+1$.
\newline
By finite induction, using the same arguments, we can easily prove that $\deg \left( f_{n+1} \right)=\deg \left( f_{n} \right)+1, \; n \geq n_{0}$, which concludes the proof.
\end{proof}

\medskip

Let us fix a non-negative integer $k$ and some constants $a_{i} \in   \mathbb{C},\; i=0,\ldots k,$ and let us set $\Lambda=\displaystyle\sum_{i=0}^{k}a_{i}\left(Dx\right)^{i}D$. 


\begin{lemma}
The following identity holds, for any positive integer $n$,
\begin{equation}\label{lambda x^n}
\Lambda\left(x^{n}\right)=\left( n \displaystyle\sum_{i=0}^{k} a_{i}n^{i} \right) x^{n-1}.\\
\end{equation}
\end{lemma}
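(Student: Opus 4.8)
The plan is to establish the formula by understanding how each building block $\left(Dx\right)^{i}D$ acts on the monomial $x^{n}$. First I would compute $D\left(x^{n}\right)=nx^{n-1}$, and then observe the key fact that the operator $Dx$ acts diagonally on monomials: $\left(Dx\right)\left(x^{m}\right)=D\left(x^{m+1}\right)=(m+1)x^{m}$. Thus $x^{m}$ is an eigenvector of $Dx$ with eigenvalue $m+1$, and consequently $\left(Dx\right)^{i}\left(x^{m}\right)=(m+1)^{i}x^{m}$ by an immediate induction on $i$.

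Next I would assemble these pieces. Applying $D$ first gives $D\left(x^{n}\right)=nx^{n-1}$; then applying $\left(Dx\right)^{i}$ to the monomial $x^{n-1}$ yields $\left(Dx\right)^{i}\left(x^{n-1}\right)=n^{i}x^{n-1}$ using the eigenvalue computation above with $m=n-1$ (here $(m+1)=n$). Hence $\left(Dx\right)^{i}D\left(x^{n}\right)=n\cdot n^{i}x^{n-1}=n^{i+1}x^{n-1}$ for each $i$, and in particular for $i=0$ this reads $D\left(x^{n}\right)=nx^{n-1}$, consistent with the convention $\left(Dx\right)^{0}=I$. Summing over $i$ with the coefficients $a_{i}$ and using linearity of $\Lambda$ then gives
\begin{equation*}
\Lambda\left(x^{n}\right)=\sum_{i=0}^{k}a_{i}\left(Dx\right)^{i}D\left(x^{n}\right)=\sum_{i=0}^{k}a_{i}\,n^{i+1}x^{n-1}=\left(n\sum_{i=0}^{k}a_{i}n^{i}\right)x^{n-1},
\end{equation*}
which is exactly \eqref{lambda x^n}.

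There is no real obstacle here; the only point requiring a moment of care is the bookkeeping of indices, namely that $D$ lowers the degree by one before the $\left(Dx\right)^{i}$ factors act, so the relevant eigenvalue of $Dx$ is $n$ rather than $n+1$, and that the $i=0$ term is handled uniformly by reading $\left(Dx\right)^{0}$ as the identity operator. One could alternatively phrase the induction on $i$ explicitly as a displayed lemma, but since the diagonalization of $Dx$ on the monomial basis is elementary, stating it inline suffices. The positivity hypothesis $n\geq 1$ is used only to ensure $x^{n-1}$ makes sense as a polynomial (equivalently, that $D\left(x^{n}\right)\neq 0$).
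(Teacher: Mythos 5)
Your proof is correct and is essentially the paper's own argument: the paper likewise reduces everything to the single identity $\left(Dx\right)^{i}D\left(x^{n}\right)=n^{i+1}x^{n-1}$ (stated there via the equivalent form $D\left(xD\right)^{i}$) and then sums with the coefficients $a_{i}$. Your version merely spells out the diagonal action of $Dx$ on monomials that the paper leaves as ``easy to see.''
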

\begin{proof} It is easy to see that 
\begin{equation}\label{DxD-xn}
\left(Dx\right)^{i}D\left( x^{n} \right)=D\left(xD\right)^{i}\left( x^{n} \right)=n^{i+1}x^{n-1},\; i \geq 0,\end{equation}
whence the desired result.
\end{proof}

As a consequence, we have the next Proposition.
\begin{proposition}\label{lambda lowering operator}
\begin{description}
\item[i ] For any positive integer $k$,  $\Lambda =0$ $\Leftrightarrow$ $a_{i}=0,\;\; i =0,\ldots, k$.
\item[ii] Suppose $a_{k} \neq 0$, then the operator $\Lambda$ is a lowering operator if and only if the polynomial $\displaystyle f(x)=\sum _{i=0}^{k}a_{i}x^{i}$ has no positive integer root.
\end{description}
\end{proposition}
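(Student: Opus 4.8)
The plan is to deduce both parts directly from the explicit action formula \eqref{lambda x^n}. Writing $f(x)=\sum_{i=0}^{k}a_{i}x^{i}$, that identity reads $\Lambda(x^{n})=n\,f(n)\,x^{n-1}$ for every positive integer $n$; moreover $\Lambda(1)=0$, since every summand of $\Lambda$ ends with the factor $D$ and $D(1)=0$. These two observations are essentially all that is needed.

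For \textbf{i}, the implication $(a_{i}=0$ for $i=0,\dots,k)\Rightarrow\Lambda=0$ is immediate from linearity. Conversely, if $\Lambda=0$ then $n\,f(n)=0$ for all $n\geq1$ by \eqref{lambda x^n}, hence $f(n)=0$ for every positive integer $n$; a polynomial with infinitely many roots is identically zero, so all the $a_{i}$ vanish.

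For \textbf{ii}, recall that $\Lambda$ is a lowering operator exactly when $\Lambda(1)=0$ and $\deg(\Lambda(x^{n}))=n-1$ for all $n\geq1$. The first condition is automatic. By \eqref{lambda x^n} the polynomial $\Lambda(x^{n})$ equals $n\,f(n)\,x^{n-1}$, so it has degree $n-1$ if and only if its leading coefficient $n\,f(n)$ is nonzero, that is (since $n\geq1$) if and only if $f(n)\neq0$; and if $f(n)=0$ then $\Lambda(x^{n})=0$, which is not of degree $n-1$. Hence the degree condition holds for all $n\geq1$ precisely when $f$ has no positive integer root, which is the asserted equivalence. I do not anticipate a genuine obstacle: the whole argument rests on \eqref{lambda x^n}. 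The only point worth a remark is the role of the hypothesis $a_{k}\neq0$ in \textbf{ii}: it merely guarantees $\Lambda\neq0$ (by part \textbf{i}), so that one is not testing the lowering-operator property on the zero map; it otherwise plays no part, since ``$f$ has no positive integer root'' is the exact obstruction regardless of which coefficients $a_{i}$ happen to be nonzero.
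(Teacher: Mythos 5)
Your proof is correct and follows essentially the same route as the paper: both parts rest entirely on the formula \eqref{lambda x^n}, i.e.\ $\Lambda(x^{n})=n\,f(n)\,x^{n-1}$. If anything, your write-up is slightly more complete than the paper's, since you spell out the ``only if'' direction of \textbf{ii} (a positive integer root $n$ of $f$ forces $\Lambda(x^{n})=0$, violating the degree condition), which the paper leaves implicit.
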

\begin{proof}
$i)$ Suppose $\Lambda =0$, that is to say, $\Lambda\left( f \right) = 0$ for any $f \in \mathcal{P}$. In particular, $0 = \Lambda\left(x^{n}\right)=\left( n \displaystyle\sum_{i=0}^{k} a_{i}n^{i} \right) x^{n-1}$. Therefore, $\displaystyle\sum_{i=0}^{k} a_{i}n^{i} =0$, for all positive integer $n$, which implies $a_{i}=0,\;\; i =0,\ldots, k$. The reciprocity is obvious.
\newline $ii)$ Since $a_{k} \neq 0$, the polynomial $\displaystyle f(x)=\sum _{i=0}^{k}a_{i}x^{i}$ is not identically zero and $\Lambda \neq 0$. Moreover, $\Lambda$ is a lowering operator by virtue of (\ref{lambda x^n}) and the assumption.
\end{proof}


\section{The $\Lambda$ operator}

\subsection{Further definitions and the transpose operator}

The form used in the definition of the operator $\Lambda$ might seem very limitative, but in fact, as we will see right ahead, this differential operator can be presented in different layouts. 
In particular, the next results clarify that a $\Lambda$ operator (for all non-negative integer $k$) can be expressed as a product of simpler operators or as a linear combination of operators of the form $x^{i}D^{i+1}$, with $i=0,\ldots,k$. The first lemma is a simple case of the problem of normal ordering of words in $D$ and $x$, where the Stirling numbers and their generalizations have a major role. A  comprehensively study of this problem can be found in the literature, namely in \cite{Blasiak, Mansour} and in the references therein. In those studies, it is considered the partial commutation relation between two operators $U$ and $V$ defined by $UV-VU=1$. With regard to the operators $x$ and $D$, it will be useful later-on to recall that
$\left(Dx\right)D=D+xD^{2},$ or 
\begin{equation}\label{xD^2=DxD-D}
xD^2=DxD-D.
\end{equation}

\begin{lemma}\label{power(Dx)D}
For all non-negative integer $i$, it holds
\begin{equation} \label{Dx-to-powers}
\left(Dx\right)^{i}D=\sum_{m=0}^{i} S\left( i+1, m+1 \right)  x^{m}D^{m+1},
 \end{equation}
where $S\left(n,k\right)$ are the Stirling numbers of second kind.
Conversely, \begin{equation} \label{powers-to-Dx}
x^{i}D^{i+1}=\sum_{m=0}^{i}s\left(i+1,m+1\right)\left(Dx\right)^{m}D,
\end{equation}
where $s\left(n,k\right)$ are the Stirling numbers of first kind.
\end{lemma}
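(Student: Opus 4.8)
The plan is to prove (\ref{Dx-to-powers}) by induction on $i$ and then deduce (\ref{powers-to-Dx}) by inverting the resulting triangular change of basis, using the classical reciprocity between the two kinds of Stirling numbers. A convenient preliminary remark (already visible in (\ref{DxD-xn})) is that $\left(Dx\right)^{i}D=D\left(xD\right)^{i}$ for every $i\geq 0$: since $\left(Dx\right)D=D+xD^{2}=D\left(xD\right)$ by (\ref{xD^2=DxD-D}), a trivial induction propagates the identity. This is not strictly needed, but it explains why the right-hand side of (\ref{Dx-to-powers}) is precisely the normal-ordered form of $D\left(xD\right)^{i}$.

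For (\ref{Dx-to-powers}), the case $i=0$ is just $D=S\left(1,1\right)x^{0}D^{1}$. Assuming the formula for some $i$, I would write $\left(Dx\right)^{i+1}D=\left(Dx\right)\circ\big[\sum_{m=0}^{i}S\left(i+1,m+1\right)x^{m}D^{m+1}\big]$ and use the elementary identity $\left(Dx\right)\left(x^{m}D^{m+1}\right)=D\left(x^{m+1}D^{m+1}\right)=(m+1)x^{m}D^{m+1}+x^{m+1}D^{m+2}$, the last equality being an instance of (\ref{DpD^k}) with $p=x^{m+1}$. Splitting the two contributions and collecting the coefficient of $x^{m}D^{m+1}$ gives $(m+1)S\left(i+1,m+1\right)+S\left(i+1,m\right)$, which equals $S\left(i+2,m+1\right)$ by the three-term recurrence defining the Stirling numbers of the second kind; the out-of-range terms cause no trouble since $S\left(i+1,0\right)=0$ and $S\left(i+1,i+2\right)=0$. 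This yields (\ref{Dx-to-powers}) with $i$ replaced by $i+1$.

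For the converse (\ref{powers-to-Dx}), after the reindexing $n=i+1$ formula (\ref{Dx-to-powers}) states that the lower-triangular matrix $\big(S(n,m)\big)_{n,m\geq 1}$ carries the family $\big(x^{m-1}D^{m}\big)_{m\geq 1}$ to the family $\big(\left(Dx\right)^{n-1}D\big)_{n\geq 1}$. Multiplying by the inverse matrix and using the orthogonality relation $\sum_{k}s(n,k)S(k,m)=\delta_{n,m}$, one recovers $x^{n-1}D^{n}=\sum_{m=1}^{n}s(n,m)\left(Dx\right)^{m-1}D$, which is (\ref{powers-to-Dx}). Should a self-contained argument be preferred, (\ref{powers-to-Dx}) can instead be obtained by the same induction on $i$, now invoking the recurrence $s(n+1,k)=s(n,k-1)-n\,s(n,k)$ together with (\ref{DpD^k}).

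I do not anticipate a genuine obstacle here: everything reduces to recognising the two Stirling recurrences after a linear manipulation. The only points requiring care are the systematic shift by one between the exponent $i$ in $\left(Dx\right)^{i}D$ and $x^{i}D^{i+1}$ and the first Stirling argument $n=i+1$, and checking that the boundary values of $S$ and $s$ make the spurious terms vanish, so that the recurrences apply without case distinctions.
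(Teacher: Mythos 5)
Your proposal is correct. For the converse identity (\ref{powers-to-Dx}) you argue exactly as the paper does: the paper also inverts the triangular system via the Stirling orthogonality relation (it writes it as $\sum S(n,k)s(k,m)=\delta_{n,m}$, you as $\sum_{k}s(n,k)S(k,m)=\delta_{n,m}$; both orderings are valid). Where you differ is on the direct identity (\ref{Dx-to-powers}): the paper simply cites Comtet (p.~220) for it, whereas you supply a self-contained induction. Your inductive step is sound: $(Dx)\left(x^{m}D^{m+1}\right)=D\left(x^{m+1}D^{m+1}\right)=(m+1)x^{m}D^{m+1}+x^{m+1}D^{m+2}$ by (\ref{DpD^k}), and collecting the coefficient of $x^{m}D^{m+1}$ gives $(m+1)S(i+1,m+1)+S(i+1,m)=S(i+2,m+1)$, which is precisely the defining recurrence of the second-kind Stirling numbers, with the boundary values $S(i+1,0)=0$ and $S(i+1,i+2)=0$ handling the extreme terms as you note. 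Your preliminary observation that $(Dx)^{i}D=D(xD)^{i}$ is also correct (it follows from $Dx=I+xD$ and an easy induction) and harmless. The net effect is that your write-up proves what the paper only references, at the cost of a slightly longer argument; nothing in it conflicts with the paper's treatment.
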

\begin{proof}
Identity (\ref{Dx-to-powers}) can be consulted in \cite{Comtet} (p.220), and the reciprocal statement is justified by the orthogonality $\sum S\left(n,k \right)s\left( k,m\right) = \delta_{n,m}$, showing that each of the relations above implies the other \cite{Riordan}.
\end{proof}
As a corollary of Lemma \ref{power(Dx)D}, we can enunciate the identities of the next Lemma.
\begin{lemma}\label{linear combination of power(Dx)D}
\begin{equation}
\sum_{i=0}^{k}a_{i}\left(Dx\right)^{i}D =\sum_{m=0}^{k} \left( \sum_{i=m}^{k}   a_{i}  S\left( i+1, m+1 \right) \right) x^{m}D^{m+1};\end{equation}
\begin{equation} \sum_{i=0}^{k}a_{i}x^{i}D^{i+1} = \sum_{m=0}^{k} \left( \sum_{i=m}^{k}   a_{i}  s\left( i+1, m+1 \right) \right) \left(Dx\right)^{m}D; \end{equation}
\begin{equation}\label{lambda withk=2 in xD}
a_{0}D+a_{1}DxD+a_{2}\left(Dx\right)^{2}D= \left( a_{0}+a_{1}+a_{2} \right)D+ \left(a_{1}+3a_{2} \right)xD^{2}+a_{2}x^{2}D^{3}.
\end{equation}
\end{lemma}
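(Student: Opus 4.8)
The plan is to obtain all three displayed identities as immediate consequences of Lemma~\ref{power(Dx)D}, the only work being a rearrangement of finite double sums and, for the last identity, the evaluation of a handful of small Stirling numbers. Nothing deeper than bookkeeping is involved.

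First I would establish the first identity. Starting from $\sum_{i=0}^{k}a_{i}\left(Dx\right)^{i}D$, I substitute the expansion (\ref{Dx-to-powers}) for each $\left(Dx\right)^{i}D$, which produces the double sum $\sum_{i=0}^{k}\sum_{m=0}^{i}a_{i}S(i+1,m+1)\,x^{m}D^{m+1}$. Interchanging the order of summation --- legitimate since every sum is finite --- the index $m$ now runs from $0$ to $k$ and, for each fixed $m$, the index $i$ runs from $m$ to $k$ (the standard reindexing of the triangular region $0\le m\le i\le k$). Collecting the coefficient of $x^{m}D^{m+1}$ gives precisely $\sum_{i=m}^{k}a_{i}S(i+1,m+1)$, which is the claimed right-hand side. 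The second identity is proved in exactly the same manner, substituting (\ref{powers-to-Dx}) in place of (\ref{Dx-to-powers}) and replacing the Stirling numbers of the second kind by those of the first kind; the same reindexing of the finite double sum yields the stated formula.

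Finally, for (\ref{lambda withk=2 in xD}) I would simply specialize the first identity to $k=2$. This requires the values $S(1,1)=S(2,1)=S(2,2)=S(3,1)=S(3,3)=1$ and $S(3,2)=3$, whence the coefficient of $D$ is $a_{0}S(1,1)+a_{1}S(2,1)+a_{2}S(3,1)=a_{0}+a_{1}+a_{2}$, the coefficient of $xD^{2}$ is $a_{1}S(2,2)+a_{2}S(3,2)=a_{1}+3a_{2}$, and the coefficient of $x^{2}D^{3}$ is $a_{2}S(3,3)=a_{2}$.

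I do not expect any genuine obstacle here, since the statement is a corollary of Lemma~\ref{power(Dx)D}. The only points that require a little care are keeping the summation limits correct when the order of summation is swapped, and reading off the small Stirling numbers correctly in the $k=2$ case.
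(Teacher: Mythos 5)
Your proposal is correct and matches the paper's approach: the paper also derives all three identities as a direct corollary of Lemma~\ref{power(Dx)D} via a "straightforward calculation" with the small Stirling numbers read off a table. You have simply written out the sum interchange and the specific Stirling values ($S(3,2)=3$, the rest equal to $1$) that the paper leaves implicit.
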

\begin{proof}
A straightforward calculation yields the result. A table with the first Stirling numbers of second and first kind can be found, for instance, in \cite{Comtet}.
\end{proof}
\begin{proposition}\label{Lambda-as-a-product}
Let us consider two non-negative integers $l$ and $t$ and the operator
$$S_{l,t}=\prod_{-l}^{0}(A_{i}I+B_{i}xD)\,D\, \prod_{0}^{t}(A_{i}I+B_{i}xD),$$
where $I$ denotes the identity operator in $\mathcal{P}$ and $A_{0}=1, \, B_{0}=0, \, A_{i}, \, B_{i} \in \mathbb{C}, i=-l,\ldots, t$.
Then, for some contants $a_{i} \in \mathbb{C}, \; i=0, \ldots, l+t$, we have: 
\begin{equation}\label{operatorSlt2}
S_{l,t}=\sum_{i=0}^{l+t}a_{i}\left(Dx\right)^{i}D.
\end{equation}
Conversely, an operator $\Lambda=\displaystyle\sum_{i=0}^{k}a_{i}\left(Dx\right)^{i}D=\displaystyle\sum_{i=0}^{k}a_{i}D\left(xD\right)^{i}$ can be written in the form
$$\Lambda=S_{0,k}=D\, \prod_{1}^{k}(A_{i}I+B_{i}xD),$$
where coefficients $A_{i}$ and $B_{i}$ are obtained through the factorization of polynomial $f(x)= \sum_{i=0}^{k}a_{i}x^{i}$, that is, $\displaystyle\prod_{1}^{k}(A_{i}+B_{i}x)=\sum_{i=0}^{k}a_{i}x^{i}$.
\end{proposition}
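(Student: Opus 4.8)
The plan is to reduce everything to the action on the monomial basis. The key observation is that $xD$ is diagonal on $\{x^{n}\}_{n\ge 0}$, namely $xD(x^{n})=nx^{n}$, so each elementary factor $A_{i}I+B_{i}xD$ multiplies $x^{n}$ by the scalar $A_{i}+B_{i}n$; moreover any two such factors commute, since they are polynomials in the single operator $xD$, so the ordered products occurring in $S_{l,t}$ are unambiguous. Using in addition $D(x^{n})=nx^{n-1}$ and the fact that $A_{0}I+B_{0}xD=I$ (so the two $i=0$ factors disappear), a direct computation will give, for every $n\ge 0$,
\[
S_{l,t}(x^{n})=n\,\Big(\prod_{i=-l}^{-1}\big(A_{i}+B_{i}(n-1)\big)\Big)\Big(\prod_{i=1}^{t}\big(A_{i}+B_{i}n\big)\Big)\,x^{n-1},
\]
the right-hand side vanishing at $n=0$ as it should, since $D$ annihilates constants. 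On the other side, (\ref{DxD-xn}) gives $\big(\sum_{i=0}^{N}a_{i}(Dx)^{i}D\big)(x^{n})=n\big(\sum_{i=0}^{N}a_{i}n^{i}\big)x^{n-1}$. Since a linear operator on $\mathcal{P}$ is determined by its values on the basis $\{x^{n}\}_{n\ge0}$, identity (\ref{operatorSlt2}) is equivalent to the polynomial identity $h(n)=\sum_{i=0}^{l+t}a_{i}n^{i}$ for all $n\ge1$, where $h(x):=\big(\prod_{i=-l}^{-1}(A_{i}+B_{i}(x-1))\big)\big(\prod_{i=1}^{t}(A_{i}+B_{i}x)\big)$.

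For the direct implication I would then simply note that $\deg h\le l+t$ and define $a_{0},\dots,a_{l+t}$ to be the coefficients of $h$; then $h(n)=\sum_{i=0}^{l+t}a_{i}n^{i}$ holds identically, so $S_{l,t}$ and $\sum_{i=0}^{l+t}a_{i}(Dx)^{i}D$ agree on every monomial and hence coincide as operators on $\mathcal{P}$.

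For the converse, given $\Lambda=\sum_{i=0}^{k}a_{i}(Dx)^{i}D$, I would factor the polynomial $f(x)=\sum_{i=0}^{k}a_{i}x^{i}$ over $\mathbb{C}$ into $k$ linear factors, writing $\prod_{i=1}^{k}(A_{i}+B_{i}x)=f(x)$ (using factors with $B_{i}=0$ should $\deg f<k$), and take $l=0$, $t=k$, so that $S_{0,k}=D\,\prod_{1}^{k}(A_{i}I+B_{i}xD)$. The displayed formula above then reads $S_{0,k}(x^{n})=n\,f(n)\,x^{n-1}$, which by (\ref{lambda x^n}) is exactly $\Lambda(x^{n})$ for every $n\ge0$; hence $\Lambda=S_{0,k}$.

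I do not expect a genuine obstacle here: the whole content is the diagonalization of $xD$ on monomials, and once the displayed formula for $S_{l,t}(x^{n})$ is in hand both parts are immediate. The only points requiring a little care are bookkeeping ones — checking that the $i=0$ factors reduce to the identity, that the elementary factors commute so that the ordered products are well defined, and that $h$ has degree at most $l+t$, so that matching values at all positive integers is the same as matching coefficients.
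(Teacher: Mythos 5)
Your proof is correct, but it takes a genuinely different route from the paper's. The paper argues by induction on $l$ and $t$ at the level of operators: it peels off one factor $(A_{-l-1}I+B_{-l-1}xD)$ at a time and uses the commutation identity $xD^{2}=DxD-D$ (equation (\ref{xD^2=DxD-D})) to rewrite the result again in the form $\sum \tilde{a}_{i}(Dx)^{i}D$, producing explicit recursions such as $\tilde{a}_{0}=(A_{-l-1}-B_{-l-1})a_{0}$, $\tilde{a}_{i}=B_{-l-1}a_{i-1}+(A_{-l-1}-B_{-l-1})a_{i}$, $\tilde{a}_{l+t+1}=B_{-l-1}a_{l+t}$; the converse is merely declared self-explanatory. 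You instead diagonalize $xD$ on the monomial basis and compute $S_{l,t}(x^{n})=n\,h(n)\,x^{n-1}$ with $h(x)=\prod_{i=-l}^{-1}(A_{i}+B_{i}(x-1))\prod_{i=1}^{t}(A_{i}+B_{i}x)$, then compare with $(\sum a_{i}(Dx)^{i}D)(x^{n})=n(\sum a_{i}n^{i})x^{n-1}$ from (\ref{DxD-xn}); since a linear operator on $\mathcal{P}$ is determined by its values on the monomials and $\deg h\le l+t$, both implications follow at once, and your formula correctly accounts for the only noncommutativity present (the shift $n\mapsto n-1$ for factors sitting to the left of $D$). Your version is more elementary, gives the coefficients $a_{i}$ in closed form as the coefficients of the symbol polynomial $h$, and makes the correspondence between operator factorization and polynomial factorization transparent (which is exactly what the converse needs); the paper's induction, by contrast, yields step-by-step recursions for the $a_{i}$ in terms of the factorization data. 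Both are complete proofs; I checked that your formula reproduces the paper's explicit small cases $S_{0,0}$, $S_{1,0}$, $S_{0,1}$. The only minor point worth making explicit in your write-up is the degenerate situation $\deg f<k$ in the converse (handled, as you note, by constant factors with $B_{i}=0$).
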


\begin{proof}
Proceeding by induction, we see that 
\newline $S_{0,0}=D=(Dx)^{0}D$;
\newline $S_{1,0}=\displaystyle\sum_{i=0}^{1}a_{i}(Dx)^{i}D$, with $a_{0}=A_{-1}-B_{-1}$ and $a_{1}=B_{-1}$;
\newline and $S_{0,1}=\displaystyle\sum_{i=0}^{1}a_{i}(Dx)^{i}D$, with $a_{0}=A_{1}$ and $a_{1}=B_{1}$.
\newline
Let us assume that for some $l$ and $t$ we have $S_{l,t}=\displaystyle\sum_{i=0}^{l+t}a_{i}\left(Dx\right)^{i}D.$ Taking into account that $S_{l+1,t}=(A_{-l-1}I+B_{-l-1}xD)S_{l,t}$, we have:
\begin{align*}
S_{l+1,t}=&\left(A_{-l-1}I+B_{-l-1}xD\right) \sum_{i=0}^{l+t}a_{i}\left(Dx\right)^{i}D\\
=& B_{-l-1}a_{0}xD^{2}+\sum_{i=0}^{l+t}A_{-l-1}a_{i}\left(Dx\right)^{i}D+\sum_{i=1}^{l+t}B_{-l-1}a_{i}xD^{2}x\left(Dx\right)^{i-1}D.
\end{align*}
Considering identity (\ref{xD^2=DxD-D}), we obtain:
\begin{align*}
S_{l+1,t}=&\left(A_{-l-1}-B_{-l-1}\right)a_{0}D+B_{-l-1}a_{0}DxD\\
 & + \sum_{i=1}^{l+t}\left(A_{-l-1}-B_{-l-1}\right)a_{i}\left(Dx\right)^{i}D+\sum_{i=2}^{l+t+1}B_{-l-1}a_{i-1}\left(Dx\right)^{i}D\\
=&\sum_{i=0}^{l+t+1}\tilde{a}_{i}\left(Dx\right)^{i}D,
\end{align*}
where $\tilde{a}_{0}=\left(A_{-l-1}-B_{-l-1}\right)a_{0}$;   $\tilde{a}_{i}=B_{-l-1}a_{i-1}+\left(A_{-l-1}-B_{-l-1}\right)a_{i}$, if $i=1,\ldots, l+t$, and $\tilde{a}_{l+t+1}=B_{-l-1}a_{l+t}$.
\newline
Similarly, we can see that $S_{l,t+1}=\displaystyle \sum_{i=0}^{l+t+1}\tilde{a}_{i}\left(Dx\right)^{i}D$, where $\tilde{a}_{0}=A_{t+1}a_{0}$; $\tilde{a}_{i}=A_{t+1}a_{i}+B_{t+1}a_{i-1}$, if $i=1,\ldots, l+t$, and $\tilde{a}_{l+t+1}=B_{t+1}a_{l+t}$.
\newline The converse statement related to the operator $\Lambda$ is self-explanatory.
\end{proof}
%
In this manner, we have established that the three operators $\displaystyle\sum_{i=0}^{k}a_{i}\left(Dx\right)^{i}D$, $\displaystyle\sum_{i=0}^{k}\tilde{a}_{i}x^{i}D^{i+1}$ and 
$S_{l,t}=\displaystyle\prod_{-l}^{0}(A_{i}I+B_{i}xD)\,D\, \prod_{0}^{t}(A_{i}I+B_{i}xD)$ represent the same kind of operators, and consequently, we can select the more suitable form for each step of our path.
\begin{lemma}\label{The transpose operator tLambda}
The transpose operator $^{t}\Lambda: \mathcal{P}^{\prime} \rightarrow \mathcal{P}^{\prime}$ is defined by
\begin{equation}\label{lambda transpose}
^{t}\Lambda=\displaystyle\sum_{i=0}^{k}a_{i}(-1)^{i+1}\left(Dx\right)^{i}D.
\end{equation}
\end{lemma}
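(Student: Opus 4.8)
The plan is to compute the transpose of $\Lambda=\sum_{i=0}^{k}a_{i}\left(Dx\right)^{i}D$ directly from the linearity of the transpose map and the already-recorded action of $^{\mathrm t}D$. Since transposition is linear, $^{\mathrm t}\Lambda=\sum_{i=0}^{k}a_{i}\,{}^{\mathrm t}\!\left(\left(Dx\right)^{i}D\right)$, so the whole problem reduces to identifying the transpose of the single operator $\left(Dx\right)^{i}D$ acting on $\mathcal P$.

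The key observation is the general rule for the transpose of a composition: $^{\mathrm t}(T_{1}T_{2}\cdots T_{r})={}^{\mathrm t}T_{r}\cdots{}^{\mathrm t}T_{2}\,{}^{\mathrm t}T_{1}$, which follows immediately from $\langle {}^{\mathrm t}(T_{1}T_{2})u,p\rangle=\langle u,T_{1}T_{2}p\rangle=\langle {}^{\mathrm t}T_{1}u,T_{2}p\rangle=\langle {}^{\mathrm t}T_{2}\,{}^{\mathrm t}T_{1}u,p\rangle$. So I would first record the two building blocks on the dual side: by (\ref{funcionalDu}), $^{\mathrm t}D=-D$, and the transpose of the multiplication operator $p\mapsto xp$ is the left-multiplication $u\mapsto xu$ (this is exactly the definition $\langle xu,p\rangle=\langle u,xp\rangle$ given just before (\ref{funcionalDu})). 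Hence $^{\mathrm t}(Dx)={}^{\mathrm t}x\cdot{}^{\mathrm t}D=-xD$ on $\mathcal P'$, and iterating, $^{\mathrm t}\!\left(\left(Dx\right)^{i}\right)=(-xD)^{i}$.

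Next I would assemble the pieces. Writing $\left(Dx\right)^{i}D$ as the composition of $i$ copies of $Dx$ followed by one $D$, the composition rule gives
\begin{equation*}
{}^{\mathrm t}\!\left(\left(Dx\right)^{i}D\right)={}^{\mathrm t}D\cdot{}^{\mathrm t}\!\left(\left(Dx\right)^{i}\right)=(-D)\,(-xD)^{i}=(-1)^{i+1}D(xD)^{i}.
\end{equation*}
Finally I would use the elementary identity $D(xD)^{i}=(Dx)^{i}D$ — already invoked in the excerpt, e.g. in (\ref{DxD-xn}) and in the statement of Proposition \ref{Lambda-as-a-product} — to rewrite this as $(-1)^{i+1}\left(Dx\right)^{i}D$. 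Summing against the coefficients $a_{i}$ yields $^{\mathrm t}\Lambda=\sum_{i=0}^{k}a_{i}(-1)^{i+1}\left(Dx\right)^{i}D$, which is (\ref{lambda transpose}).

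I do not anticipate a serious obstacle here; the only point requiring a little care is bookkeeping the order-reversal in the transpose of a product and being consistent about whether $Dx$ means "multiply by $x$, then differentiate" (it does, reading right to left as operators), so that the factors $^{\mathrm t}x$ and $^{\mathrm t}D$ are composed in the correct order. An equally valid alternative, if one prefers to avoid the composition rule, is to verify (\ref{lambda transpose}) by testing both sides against $x^{n}$: the left side gives $\langle {}^{\mathrm t}\Lambda(u),x^{n}\rangle=\langle u,\Lambda(x^{n})\rangle=\big(n\sum_{i}a_{i}n^{i}\big)\langle u,x^{n-1}\rangle$ by (\ref{lambda x^n}), and a short induction using (\ref{funcionalDu}) and left-multiplication shows $\langle(-1)^{i+1}\left(Dx\right)^{i}D\,u,x^{n}\rangle=(-1)^{i+1}\langle u,(-1)^{i+1}n^{i+1}x^{n-1}\rangle=n^{i+1}\langle u,x^{n-1}\rangle$; since moments determine a form, the two operators agree. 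Either route is routine.
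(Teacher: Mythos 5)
Your proposal is correct and takes essentially the same route as the paper: the paper reduces the claim to the identity $\langle u,(Dx)^{i}Df\rangle=(-1)^{i+1}\langle (Dx)^{i}Du,f\rangle$, which it asserts "by induction over $i$" from (\ref{funcionalDu}), and your argument via the order-reversing property of transposition, the facts ${}^{\mathrm{t}}D=-D$ and ${}^{\mathrm{t}}x=x$, and the palindrome identity $D(xD)^{i}=(Dx)^{i}D$ is precisely that induction carried out explicitly. No gaps; the moment-testing alternative you sketch is also sound but unnecessary.
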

\begin{proof}
By definition, we have:
\begin{align*}
\left< ^{t}\Lambda u,f \right>=\left< u, \Lambda f \right>=\sum_{i=0}^{k}a_{i} \left< u, \left(Dx\right)^{i}D f \right>.\\
\end{align*}
Also, we can prove, by induction over $i$, regarding (\ref{funcionalDu}), that 
$$ \left< u, \left(Dx\right)^{i}D f \right>=(-1)^{i+1}\left< \left(Dx\right)^{i}D u, f \right>, \; i \geq 0.$$
Therefore,
$$\left< ^{t}\Lambda u,f \right>=\sum_{i=0}^{k}a_{i}(-1)^{i+1}\left< \left(Dx\right)^{i}D u, f \right>=\left<\sum_{i=0}^{k}a_{i}(-1)^{i+1} \left(Dx\right)^{i}D u, f \right>.\eqno\qedhere$$
\end{proof}

\subsection{$\Lambda$-Appell sequences}

Let us suppose that the fixed operator $\Lambda=\displaystyle\sum_{i=0}^{k}a_{i}\left(Dx\right)^{i}D$ is a lowering operator according to Proposition \ref{lambda lowering operator}.
Attending to the following 
\begin{align*}
\left( \Lambda B_{n+1}\right) (x)= \Lambda \left(x^{n+1}+\cdots \right)= (n+1)\left( \sum_{i=0}^{k} a_{i}(n+1)^{i} \right)x^{n} +\Lambda \left(\cdots \right)
\end{align*}
and recalling that $ \sum_{i=0}^{k} a_{i}(n+1)^{i} \neq 0$, we have:
\begin{equation}\label{Bn[1] definition}
B^{\left[1\right]}_{n}\left(x; \Lambda \right)=(n+1)^{-1}\left( \sum_{i=0}^{k} a_{i}(n+1)^{i}\right)^{-1} \left( \Lambda B_{n+1}\right) (x).
\end{equation}
Denoting by $\{u^{[1]}_{n} \left( \Lambda\right)\}_{n \geq 0}$ the dual sequence of $\{B_{n}^{\left[1\right]}\left(\cdot\,; \Lambda \right)_{n}\}_{n \geq 0}$, we have the following result.
\begin{proposition}\label{tLambda an[1]}
\begin{equation}\label{lambda transpose un}
 ^{t}\Lambda\left(u^{[1]}_{n} \left( \Lambda\right)\right)=\rho_{n}u_{n+1},
 \end{equation}
where $ ^{t}\Lambda$ is defined by (\ref{lambda transpose}) and $  \rho_{n}=(n+1)\left(\displaystyle \sum_{i=0}^{k}a_{i}(n+1)^{i}\right)$.
\end{proposition}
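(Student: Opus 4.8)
The plan is to exploit the fact, recalled in Section~\ref{sec:preliminaries}, that a form $v \in \mathcal{P}^{\prime}$ is completely determined by the scalars $\langle v, B_{m} \rangle$, $m \geq 0$; it is therefore enough to verify that $^{t}\Lambda\big(u^{[1]}_{n}(\Lambda)\big)$ and $\rho_{n}u_{n+1}$ take the same value on every $B_{m}$. Before doing so I would note that $\{B^{[1]}_{n}(\cdot\,;\Lambda)\}_{n\geq 0}$ is genuinely a MPS, hence admits a unique dual sequence $\{u^{[1]}_{n}(\Lambda)\}_{n\geq 0}$: this is precisely what the normalization in (\ref{Bn[1] definition}) guarantees, via (\ref{lambda x^n}) and Lemma~\ref{basic lemma lower operator}.

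First I would use the definition of the transpose to write, for each $m \geq 0$,
\[
\big\langle {}^{t}\Lambda\big(u^{[1]}_{n}(\Lambda)\big),\, B_{m} \big\rangle = \big\langle u^{[1]}_{n}(\Lambda),\, \Lambda B_{m} \big\rangle .
\]
For $m=0$ the right-hand side vanishes since $\Lambda(1)=0$. For $m \geq 1$, relation (\ref{Bn[1] definition}) read with $n$ replaced by $m-1$ gives $\Lambda B_{m} = \rho_{m-1}\,B^{[1]}_{m-1}(\cdot\,;\Lambda)$, so the right-hand side equals $\rho_{m-1}\big\langle u^{[1]}_{n}(\Lambda),\, B^{[1]}_{m-1}(\cdot\,;\Lambda)\big\rangle = \rho_{m-1}\,\delta_{n,m-1}$ by the defining biorthogonality of $\{u^{[1]}_{n}(\Lambda)\}_{n\geq 0}$. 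Thus $\big\langle {}^{t}\Lambda\big(u^{[1]}_{n}(\Lambda)\big),\, B_{m}\big\rangle$ equals $\rho_{n}$ when $m=n+1$ and $0$ otherwise, i.e. it equals $\rho_{n}\delta_{m,n+1} = \langle \rho_{n}u_{n+1},\, B_{m}\rangle$ for every $m \geq 0$; since a form annihilating all the $B_{m}$ is zero, (\ref{lambda transpose un}) follows.

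I do not expect a genuine obstacle here: the statement is the transcription, for the operator $\Lambda$, of the classical relation $^{t}D\big(u^{[1]}_{n}\big) = (n+1)\,u_{n+1}$ linking a sequence with its derivative sequence. The only points demanding a little care are the index shift $m \mapsto m-1$ and keeping track of the exact value $\rho_{n}=(n+1)\sum_{i=0}^{k}a_{i}(n+1)^{i}$ from (\ref{lambda x^n}), so that the factor $\rho_{m-1}$ collapses to $\rho_{n}$ exactly when $m=n+1$.
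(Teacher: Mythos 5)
Your proof is correct and follows essentially the same route as the paper: both arguments reduce to computing $\langle {}^{t}\Lambda(u^{[1]}_{n}(\Lambda)), B_{m}\rangle = \langle u^{[1]}_{n}(\Lambda), \Lambda B_{m}\rangle = \rho_{m-1}\delta_{n,m-1}$ via the definition of the transpose and the biorthogonality of the dual sequence. The only cosmetic difference is that the paper channels this computation through Lemma~\ref{lema1} to expand $^{t}\Lambda(u^{[1]}_{n}(\Lambda))$ over $u_{0},\dots,u_{n+1}$ and then identifies the coefficients, whereas you conclude directly from the fact that a form vanishing on every $B_{m}$ is zero; both are legitimate and rest on the same underlying principle.
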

\begin{proof}
\begin{align*}
& \left<  u^{[1]}_{n}\left( \Lambda\right),  B_{m}^{\left[1\right]}\left(x; \Lambda \right)   \right>= \delta_{n,m}, \; n, m \geq 0.\\
& \textrm{That\, is,}\,  \left<  u^{[1]}_{n}\left( \Lambda\right),  \Lambda B_{m+1}(x) \right>=\rho_{m} \delta_{n,m},\\
& \textrm{or,}\,  \left<   ^{t}\Lambda \left(u^{[1]}_{n}\left( \Lambda\right)\right),  B_{m+1}(x) \right>=\rho_{m}\delta_{n,m}.
\end{align*}
In particular, $\left<   ^{t}\Lambda \left(u^{[1]}_{n}\left( \Lambda\right)\right),  B_{m+1}(x) \right>=0, m \geq n+1, \; n \geq 0.$ So, by Lemma \ref{lema1}, we get:
$$ ^{t}\Lambda \left(u^{[1]}_{n}\left( \Lambda\right)\right)= \sum_{\nu=0}^{n+1}  \lambda_{n, \nu} u_{\nu}, \; \geq 0,$$
with $\lambda_{n, \nu}=\left<^{t}\Lambda \left(u^{[1]}_{n}\left( \Lambda\right)\right), B_{\nu}(x) \right>, \; 0 \leq \nu \leq n+1.$
Notice that 
\begin{align*}
\lambda_{n, 0}&=\left<^{t}\Lambda \left(u^{[1]}_{n}\left( \Lambda\right)\right), B_{0}(x) \right>\\
&=\sum_{i=0}^{k}a_{i}(-1)^{i+1}\left< \left(Dx\right)^{i}D \left(  u^{[1]}_{n}\left( \Lambda\right)  \right) , 1  \right>  =0.  
\end{align*}
Also, if $1 \leq \nu \leq n$, then
\begin{align*}
& \lambda_{n, \nu}=\left<^{t}\Lambda \left(u^{[1]}_{n}\left( \Lambda\right)\right), B_{\nu}(x) \right>=\nu \left(\sum_{i=0}^{k}a_{i}\nu^{i}\right)\delta_{n, \nu-1} =0, \; \textrm{and}\\
&\lambda_{n, n+1}=\left<  ^{t}\Lambda \left(  u^{[1]}_{n} \left( \Lambda \right) \right), B_{n+1}(x)    \right>=\rho_{n} \neq 0.
\end{align*}
Consequently, $^{t}\Lambda \left(u^{[1]}_{n}\left( \Lambda\right)\right)=\lambda_{n, n+1} u_{n+1}.$
\end{proof}
\begin{proposition}\label{lambda-Appell-car}
Given a MPS $\{B_{n}\}_{n \geq 0}$, the following statements are equivalent.
\begin{description}
\item[a] $\{B_{n}\}_{n \geq 0}$ is $\Lambda$-Appell, that is, $B^{\left[1\right]}_{n}\left(x; \Lambda \right) = B_{n}(x)$;
\item[b] 
$ ^{t}\Lambda\left(u_{n} \right) = \rho_{n} u_{n+1},$ where $\rho_{n} = (n+1)\left(  \displaystyle \sum_{i=0}^{k}a_{i}(n+1)^{i}\right);$
\item[c] \label{lambda appell em formas}
$u_{n}=\left(n! \right) ^{-1}   \displaystyle \left(\prod_{s=1}^{n}\left( \sum_{i=0}^{k} a_{i} s^{i}  \right) \right)^{-1} \left( ^{t}\Lambda \right) ^{n} u_{0}.$
\end{description}
\end{proposition}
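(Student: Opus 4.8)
The plan is to organise the proof as the chain of implications a) $\Rightarrow$ b), b) $\Leftrightarrow$ c) and b) $\Rightarrow$ a), so that the three statements become equivalent. The whole argument rests on one preliminary observation: since $\Lambda$ is, by hypothesis, a lowering operator, identity (\ref{lambda x^n}) shows that $\Lambda$ maps each $x^{n}$, $n\geq 1$, onto a \emph{nonzero} multiple of $x^{n-1}$, so every monomial lies in $\Lambda(\mathcal{P})$ and $\Lambda$ is onto $\mathcal{P}$. Dualising, $^{t}\Lambda u=0$ would give $\langle u,\Lambda f\rangle=0$ for all $f\in\mathcal{P}$, hence $\langle u,g\rangle=0$ for every $g\in\mathcal{P}$, i.e. $u=0$; thus $^{t}\Lambda$ is injective on $\mathcal{P}^{\prime}$. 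This is the device that will let me pass from an identity about the $^{t}\Lambda$-image of a form back to the form itself.

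For a) $\Rightarrow$ b): if $\{B_{n}\}_{n\geq0}$ is $\Lambda$-Appell then $B_{n}=B^{[1]}_{n}(\cdot\,;\Lambda)$ for all $n$, so the two monic sequences have the same (unique) dual sequence, $u_{n}=u^{[1]}_{n}(\Lambda)$; feeding this into Proposition \ref{tLambda an[1]} yields $^{t}\Lambda(u_{n})=\rho_{n}u_{n+1}$ with $\rho_{n}=(n+1)\bigl(\sum_{i=0}^{k}a_{i}(n+1)^{i}\bigr)$, which is b). For b) $\Leftrightarrow$ c) it suffices to iterate: b) gives $u_{n}=\bigl(\prod_{j=0}^{n-1}\rho_{j}\bigr)^{-1}(^{t}\Lambda)^{n}u_{0}$, and since $\prod_{j=0}^{n-1}\rho_{j}=n!\,\prod_{s=1}^{n}\bigl(\sum_{i=0}^{k}a_{i}s^{i}\bigr)$, this is precisely c); conversely, writing $c_{n}$ for the scalar appearing in c), one reads off $^{t}\Lambda(u_{n})=c_{n}(^{t}\Lambda)^{n+1}u_{0}=(c_{n}/c_{n+1})u_{n+1}$ and checks $c_{n}/c_{n+1}=\rho_{n}$, recovering b).

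For b) $\Rightarrow$ a): set $C_{n}:=B^{[1]}_{n}(\cdot\,;\Lambda)$, which is a MPS because $\Lambda$ is a lowering operator (the normalisation in (\ref{Bn[1] definition}) makes it monic of degree $n$), with dual sequence $\{u^{[1]}_{n}(\Lambda)\}_{n\geq0}$. Proposition \ref{tLambda an[1]} gives $^{t}\Lambda(u^{[1]}_{n}(\Lambda))=\rho_{n}u_{n+1}$, while b) gives $^{t}\Lambda(u_{n})=\rho_{n}u_{n+1}$; subtracting, $^{t}\Lambda\bigl(u_{n}-u^{[1]}_{n}(\Lambda)\bigr)=0$, so the injectivity of $^{t}\Lambda$ forces $u_{n}=u^{[1]}_{n}(\Lambda)$ for every $n$. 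Then, for all $m,n\geq 0$, $\langle u_{m},C_{n}-B_{n}\rangle=\langle u^{[1]}_{m}(\Lambda),C_{n}\rangle-\langle u_{m},B_{n}\rangle=\delta_{m,n}-\delta_{m,n}=0$, whence $C_{n}-B_{n}=0$ and $\{B_{n}\}_{n\geq0}$ is $\Lambda$-Appell.

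The only genuinely non-formal point is the step b) $\Rightarrow$ a): one must recover information about the polynomials $B_{n}$ from an identity that only constrains $^{t}\Lambda(u_{n})$, and the clean way to do so is the injectivity of $^{t}\Lambda$, i.e. the surjectivity of the lowering operator $\Lambda$, which itself relies on the explicit action (\ref{lambda x^n}). Everything else is dualisation combined with the uniqueness of dual sequences and a routine induction.
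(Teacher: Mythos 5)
Your proof is correct and follows essentially the same route as the paper: a) $\Rightarrow$ b) via Proposition \ref{tLambda an[1]} together with the uniqueness of the dual sequence, b) $\Leftrightarrow$ c) by iterating the recurrence $u_{n+1}=\rho_{n}^{-1}\,{}^{t}\Lambda(u_{n})$, and b) $\Rightarrow$ a) by comparing $^{t}\Lambda(u_{n})$ with $^{t}\Lambda\bigl(u^{[1]}_{n}(\Lambda)\bigr)$ and invoking the injectivity of $^{t}\Lambda$. The only difference is that you explicitly justify two steps the paper merely asserts, namely the injectivity of $^{t}\Lambda$ (via the surjectivity of the lowering operator $\Lambda$, read off from (\ref{lambda x^n})) and the passage from $u_{n}=u^{[1]}_{n}(\Lambda)$ back to $B_{n}=B^{[1]}_{n}(\cdot\,;\Lambda)$ using the fact that $\langle u_{m},p\rangle=0$ for all $m$ forces $p=0$; both are correct and welcome additions.
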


\begin{proof}
If $\{B_{n}\}_{n \geq 0}$ is $\Lambda$-Appell then $u_{n}=u^{[1]}_{n}\left(  \Lambda \right)$. Considering identity (\ref{lambda transpose un}), we obtain:
$$ ^{t}\Lambda \left(u_{n} \right)=\rho_{n}u_{n+1},\;\;\; \textrm{or}\;\;\;u_{n+1} = \rho_{n}^{-1}  \;^{t}\Lambda\left(u_{n} \right).$$
Recursively, we get that 
$u_{n} =\displaystyle \prod_{j=0}^{n-1} \rho_{j}^{-1} \left(^{t}\Lambda \right)^{n}\left(u_{0} \right).$ 
\newline
Since $\displaystyle \prod_{j=0}^{n-1} \rho_{j}^{-1} = \left(n! \right) ^{-1}   \displaystyle \left(\prod_{s=1}^{n}\left( \sum_{i=0}^{k} a_{i} s^{i}  \right) \right)^{-1}$, we obtain (c).

Conversely, if we suppose identity of item (b), then, from  (\ref{lambda transpose un}), we conclude that $ ^{t}\Lambda \left( u^{[1]}_{n}\left( \Lambda  \right)  \right)  =  \, ^{t}\Lambda  \left( u_{n} \right)$. Similarly, from identity of item (c), we can obtain the relation $u_{n+1} =  \rho_{n}^{-1}  \;^{t}\Lambda\left(u_{n} \right)$, as the next calculations explain, and again due to (\ref{lambda transpose un}), we conclude $ ^{t}\Lambda \left( u^{[1]}_{n}\left( \Lambda  \right)  \right)  =  \, ^{t}\Lambda  \left( u_{n} \right)$.
\begin{align*}
&u_{n+1}= \left((n+1)! \right) ^{-1}    \left(\prod_{s=1}^{n+1}\left( \sum_{i=0}^{k} a_{i} s^{i}  \right) \right)^{-1} \left( ^{t}\Lambda \right) ^{n+1} u_{0}\\
&= (n+1) ^{-1}    \left( \sum_{i=0}^{k} a_{i} (n+1)^{i} \right)^{-1} \left( ^{t}\Lambda \right)  \left(n! \right)^{-1} \left(\prod_{s=1}^{n}\left( \sum_{i=0}^{k} a_{i} s^{i}  \right) \right)^{-1}\left( ^{t}\Lambda \right) ^{n} u_{0}\\
&=(n+1)^{-1}    \left( \sum_{i=0}^{k} a_{i} (n+1)^{i} \right)^{-1}  \; ^{t}\Lambda \left(u_{n} \right).
\end{align*}
For any lowering operator $\mathcal{O}$, we can assure that $ ^{t}\mathcal{O} : \mathcal{P}^{\prime} \rightarrow \mathcal{P}^{\prime}$ is a one-to-one operator, and thus, in both situations, we conclude that $u^{[1]}_{n}\left(  \Lambda \right) = u_{n}$ proving that $\{B_{n}\}_{n \geq 0}$ is $\Lambda$-Appell.
\end{proof}
\section{A cubic decomposition of an Appell MPS}
For any MPS ${\{W_{n}\}}_{n \geq 0}$,
there are three MPSs ${\{P_{n}\}}_{n \geq 0}$, ${\{Q_{n}\}}_{n \geq
0}$ and ${\{R_{n}\}}_{n \geq 0}$, so that
\begin{align}
\label{decgeral1} W_{3n}(x) &= P_{n}(x^3)+xa_{n-1}^{1}(x^3)+x^2a_{n-1}^{2}(x^3) \\
\label{decgeral2} W_{3n+1}(x) &= b_{n}^{1}(x^3)+xQ_{n}(x^3)+x^2b_{n-1}^{2}(x^3) \\
\label{decgeral3} W_{3n+2}(x) &=
c_{n}^{1}(x^3)+xc_{n}^{2}(x^3)+x^2R_{n}(x^3),
\end{align}
with $\deg a_{n-1}^{1}\leq n-1$, $\deg a_{n-1}^{2}\leq n-1$, $\deg
b_{n}^{1}\leq n$, $\deg b_{n-1}^{2}\leq n-1$, $\deg c_{n}^{1}\leq n$, $\deg c_{n}^{2}\leq n$ and $a_{-1}^{1}(x)=a_{-1}^{2}(x)=b_{-1}^{2}(x)=0$.
This is a particular case of the general cubic
decomposition of any MPS presented in \cite{Mesquita articleJDE}, where all the parameters involved are considered zero. In this cubic decomposition (CD) (\ref{decgeral1})-(\ref{decgeral3})
of ${\{W_{n}\}}_{n \geq 0}$, the sequences:
\begin{itemize}
\item ${\{P_{n}\}}_{n \geq 0}, {\{Q_{n}\}}_{n \geq 0}, {\{R_{n}\}}_{n \geq 0}$ are called the principal components;
\item ${\{a_{n-1}^{1}\}}_{n \geq 0}$, ${\{a_{n-1}^{2}\}}_{n \geq 0}$, ${\{b_{n}^{1}\}}_{n \geq 0}$,
${\{b_{n-1}^{2}\}}_{n \geq 0}$, ${\{c_{n}^{1}\}}_{n \geq 0}$, ${\{c_{n}^{2}\}}_{n \geq 0}$ are called
the secondary components, since they are sequences of polynomials although not necessarily bases for the vector space of polynomials $\mathcal{P}$.
\end{itemize}
The nine component sequences are assembled in the following matrix \cite{Mesquita articleJDE}.
\begin{equation}\label{matrix M}
 M_{n}(x)=\left(
          \begin{array}{ccc}
            P_{n}(x) & a_{n-1}^{1}(x) & a_{n-1}^{2}(x) \\
            b_{n}^{1}(x) & Q_{n}(x) & b_{n-1}^{2}(x) \\
            c_{n}^{1}(x) & c_{n}^{2}(x) & R_{n}(x) \\
          \end{array}
        \right)
\end{equation}
\begin{lemma}\label{base dos 3}\cite{Mesquita articleJDE}
Let $P(x),\; Q(x)$ and  $R(x)$ be three polynomials. \par
$P(x^3)+xQ(x^3)+x^2R(x^3)=0$
implies  $P(x)=Q(x)=R(x)=0$.
\end{lemma}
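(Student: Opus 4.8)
The plan is to use the fact that the three blocks $P(x^3)$, $xQ(x^3)$ and $x^2R(x^3)$ are supported on monomials whose exponents fill three disjoint residue classes modulo $3$, so that no cancellation can occur between different blocks and each of them must vanish on its own.

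Concretely, I would write $P(x)=\sum_{j\ge 0}p_jx^j$, $Q(x)=\sum_{j\ge 0}q_jx^j$, $R(x)=\sum_{j\ge 0}r_jx^j$. Then
\[
P(x^3)+xQ(x^3)+x^2R(x^3)=\sum_{j\ge 0}p_jx^{3j}+\sum_{j\ge 0}q_jx^{3j+1}+\sum_{j\ge 0}r_jx^{3j+2}.
\]
Every non-negative integer is uniquely of the form $3j$, $3j+1$ or $3j+2$, so the monomials occurring on the right-hand side are pairwise distinct; the vanishing of the left-hand side therefore forces $p_j=q_j=r_j=0$ for all $j$, that is, $P=Q=R=0$.

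If one prefers an argument that does not handle the coefficients index by index, one may instead substitute $x$, $\omega x$, $\omega^2 x$ into the identity, where $\omega=e^{2i\pi/3}$ is a primitive cube root of unity. Since $(\omega^k x)^3=x^3$, this yields the system
\[
P(x^3)+\omega^{k}x\,Q(x^3)+\omega^{2k}x^2R(x^3)=0,\qquad k=0,1,2,
\]
whose $3\times 3$ coefficient matrix is the Vandermonde matrix built on $1,\omega,\omega^2$, which is invertible. Hence $P(x^3)\equiv xQ(x^3)\equiv x^2R(x^3)\equiv 0$, and after cancelling the (nonzero) powers of $x$ and relabelling $x^3$ as a fresh indeterminate we again obtain $P=Q=R=0$.

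There is essentially no obstacle in this lemma; the only point to stay attentive to is the elementary step from ``$P(x^3)$ is the zero polynomial'' to ``$P$ is the zero polynomial'', which is clear because the coefficients of $P(x^3)$ are exactly those of $P$, merely redistributed over the degrees that are multiples of $3$.
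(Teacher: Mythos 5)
Your argument is correct. The paper does not prove this lemma at all --- it simply cites it from the reference on the general cubic decomposition --- so there is no in-text proof to compare against; but the residue-classes-mod-$3$ argument you give is the standard and complete justification, and your alternative via the cube roots of unity and the invertibility of the Vandermonde matrix on $1,\omega,\omega^{2}$ is equally valid. Both versions correctly handle the only delicate point, namely passing from $P(x^{3})=0$ to $P=0$.
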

\begin{proposition}\label{appell cd 9relations}
An Appell MPS $\{W_{n}\}_{n \geq 0}$ admits the CD
(\ref{decgeral1})-(\ref{decgeral3}) if and only if the following
relations are fulfilled for $n \geq 0$, where $I$ denotes the identity operator in $\mathcal{P}$.
\begin{align}
\label{appellCD-c1} \big(I+3xD \big)Q_{n}(x)&=(3n+1)P_{n}(x),\\
\label{appellCD-c2} \big(2I+3xD \big)b_{n-1}^{2}(x)&=(3n+1)a_{n-1}^{1}(x),\\
\label{appellCD-c3} 3D\,b_{n}^{1}(x)&=(3n+1)a_{n-1}^{2}(x),\\
\label{appellCD-c4}\big(I+3xD \big)c_{n}^{2}(x)&=(3n+2)b_{n}^{1}(x),\\
\label{appellCD-c5} \big(2I+3xD \big)R_{n}(x)&=(3n+2)Q_{n}(x),\\
\label{appellCD-c6} 3D\,c_{n}^{1}(x)&=(3n+2)b_{n-1}^{2}(x),\\
\label{appellCD-c7} \big(I+3xD \big)a_{n}^{1}(x)&=(3n+3)c_{n}^{1}(x),\\
\label{appellCD-c8} \big(2I+3xD \big)a_{n}^{2}(x)&=(3n+3)c_{n}^{2}(x),\\
\label{appellCD-c9} 3D\,P_{n+1}(x)&=(3n+3)R_{n}(x),
\end{align}
\end{proposition}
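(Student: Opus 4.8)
The plan is to read the Appell condition $DW_{m+1}(x)=(m+1)W_m(x)$ through the three residue classes $m\equiv 0,1,2 \pmod 3$: on the left insert the cubic decomposition of $W_{m+1}$ and differentiate, on the right insert the cubic decomposition of $W_m$, and then separate the resulting polynomial identity into its three graded pieces by means of Lemma \ref{base dos 3}. The converse is the same computation run backwards.

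The first step is to record how $D$ acts on a single graded block $x^{j}g(x^{3})$. A direct computation gives, for any $g\in\mathcal{P}$,
$$D\bigl[g(x^{3})\bigr]=x^{2}\,(3Dg)(x^{3}),\qquad D\bigl[xg(x^{3})\bigr]=\bigl((I+3xD)g\bigr)(x^{3}),\qquad D\bigl[x^{2}g(x^{3})\bigr]=x\,\bigl((2I+3xD)g\bigr)(x^{3}).$$
Thus $D$ cyclically shifts the three graded components of a cubic decomposition — the $x^{0}$-block goes to the $x^{2}$-block, the $x^{1}$-block to the $x^{0}$-block, and the $x^{2}$-block to the $x^{1}$-block — each shift carrying, respectively, the operator $3D$, $I+3xD$ or $2I+3xD$. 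This is precisely where the three operators occurring in (\ref{appellCD-c1})--(\ref{appellCD-c9}) come from, and it makes the rest of the argument mechanical.

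Concretely, applying $D$ to $W_{3n+1}(x)=b_{n}^{1}(x^{3})+xQ_{n}(x^{3})+x^{2}b_{n-1}^{2}(x^{3})$ and using the rules above yields
$$DW_{3n+1}(x)=\bigl((I+3xD)Q_{n}\bigr)(x^{3})+x\,\bigl((2I+3xD)b_{n-1}^{2}\bigr)(x^{3})+x^{2}\,\bigl(3Db_{n}^{1}\bigr)(x^{3}),$$
and comparing this with $(3n+1)W_{3n}(x)=(3n+1)\bigl(P_{n}(x^{3})+xa_{n-1}^{1}(x^{3})+x^{2}a_{n-1}^{2}(x^{3})\bigr)$, Lemma \ref{base dos 3} delivers exactly (\ref{appellCD-c1}), (\ref{appellCD-c2}), (\ref{appellCD-c3}). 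Differentiating $W_{3n+2}$ and comparing with $(3n+2)W_{3n+1}$ gives in the same way (\ref{appellCD-c4}), (\ref{appellCD-c5}), (\ref{appellCD-c6}); differentiating $W_{3n+3}=W_{3(n+1)}$ and comparing with $(3n+3)W_{3n+2}$ gives (\ref{appellCD-c7}), (\ref{appellCD-c8}), (\ref{appellCD-c9}). Since the three cases cover all $m\ge 0$, this proves the forward implication. For the converse, granting the nine identities one simply re-assembles, via the same differentiation rules, $DW_{3n+1}$, $DW_{3n+2}$ and $DW_{3n+3}$ into $(3n+1)W_{3n}$, $(3n+2)W_{3n+1}$ and $(3n+3)W_{3n+2}$, so $DW_{m+1}(x)=(m+1)W_m(x)$ for every $m\ge 0$ and $\{W_n\}_{n\ge0}$ is Appell.

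No genuine obstacle is expected: the proof is bookkeeping of which component is mapped to which by $D$, together with consistent indexing of the residue classes. The only place needing a moment of care is the boundary case $n=0$, where the identities must be checked against the conventions $a_{-1}^{1}=a_{-1}^{2}=b_{-1}^{2}=0$ — for instance (\ref{appellCD-c3}) at $n=0$ reads $3Db_{0}^{1}=0$, which is consistent since $\deg W_{1}=1$ forces $b_{0}^{1}$ to be a constant.
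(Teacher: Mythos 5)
Your proposal is correct and follows essentially the same route as the paper: substitute the cubic decomposition into $DW_{m+1}=(m+1)W_m$ for each residue class of $m$ modulo $3$, compute the derivative blockwise, and separate the three graded components with Lemma \ref{base dos 3}, which (being an equivalence of the graded pieces) also gives the converse. Your preliminary tabulation of how $D$ shifts the blocks $x^{j}g(x^{3})$ is just a tidier packaging of the same computation the paper carries out inline.
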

\begin{proof}
If $\{W_{n}\}_{n \geq 0}$ is an Appell MPS, then it fulfils $D\,W_{3n+1}(x)=(3n+1)W_{3n}(x),\; n \geq 0$. Considering the CD of $W_{3n+1}(x)$ and $W_{3n}(x)$, we obtain:
$$D\, \Big( b_{n}^{1}(x^3)+xQ_{n}(x^3)+x^2b_{n-1}^{2}(x^3)   \Big) =(3n+1) \Big(  P_{n}(x^3)+xa_{n-1}^{1}(x^3)+x^2a_{n-1}^{2}(x^3)\Big).$$
We can rewrite each term of the above relation as follows
\begin{align*}
&Q_{n}(x^3)+3x^3DQ_{n}(x^3)+x\Big( 2b_{n-1}^{2}(x^3)+3x^3Db_{n-1}^{2}(x^3)\Big)+3x^2D\,b_{n}^{1}(x^3)\\
&=(3n+1) \big(P_{n}(x^3)+xa_{n-1}^{1}(x^3)+x^2a_{n-1}^{2}(x^3)\big).
\end{align*}

Applying lemma \ref{base dos 3}, we get (\ref{appellCD-c1} -\ref{appellCD-c3}). The same procedure applied to 
\newline $D\,W_{3n+2}(x)$ $=(3n+2)W_{3n+1}(x)$ and $D\,W_{3n+3}(x)=(3n+3)W_{3n+2}(x)$ yields the remaining identities.\end{proof}
Let us remark that the nine conditions of Proposition \ref{appell cd 9relations} can be written using matrix identities, as follows.
\begin{equation}\label{I+3xD}
 \left(I+3x\,D\right)\left(\begin{array}{c}  c_{n-1}^{2}(x)\\ a_{n-1}^{1}(x) \\ Q_{n}(x)  \end{array}\right) = 
 \left(\begin{array}{ccc} 3n-1 & 0 & 0 \\ 0 & 3n & 0 \\ 0 & 0 & 3n+1 \end{array}\right) 
 \left(\begin{array}{c} b_{n-1}^{1}(x) \\ c^{1}_{n-1}(x) \\ P_{n}(x)\end{array}\right);
\end{equation}
\begin{equation}\label{2I+3xD}
\left(2I+3x\,D\right)\left(\begin{array}{c}  a_{n-1}^{2}(x)\\ b_{n-1}^{2}(x) \\ R_{n}(x)  \end{array}\right)= \left(\begin{array}{ccc} 3n & 0 & 0 \\ 0 & 3n+1 & 0 \\ 0 & 0 & 3n+2 \end{array}\right)  \left(\begin{array}{c} c_{n-1}^{2}(x) \\ a^{1}_{n-1}(x) \\ Q_{n}(x)\end{array}\right);
\end{equation}
\begin{equation}\label{3D}
 \left(3\,D\right)\left(\begin{array}{c}  b_{n}^{1}(x) \\ c^{1}_{n}(x)\\ P_{n+1}(x)  \end{array}\right)= \left(\begin{array}{ccc} 3n+1 & 0 & 0 \\ 0 & 3n+2 & 0 \\ 0 & 0 & 3n+3 \end{array}\right) \left(\begin{array}{c} a_{n-1}^{2}(x) \\ b^{2}_{n-1}(x) \\ R_{n}(x)\end{array}\right).
\end{equation}
\begin{proposition}\label{componentsequencesFAppell}
Let us consider an Appell MPS $\{W_{n}\}_{n \geq 0}$ and the correspondent CD
(\ref{decgeral1})-(\ref{decgeral3}). Then, for $n \geq 1$, we have:
\begin{align*}
a^{2}_{n-1}(x)&=\left((n+1)(3n+1)(3n+2)\right)^{-1} \mathcal{O}_{0,1,2}a^{2}_{n}(x),\\
b^{2}_{n-1}(x)&=\left((n+1)(3n+2)(3n+4)\right)^{-1} \mathcal{O}_{0,1,2}b^{2}_{n}(x),\\
 R_{n}(x)&=\left((n+1)(3n+4)(3n+5)\right)^{-1} \mathcal{O}_{0,1,2}R_{n+1}(x),\\
a^{1}_{n-1}(x)&=\left((n+1)(3n+1)(3n+2)\right)^{-1} \mathcal{O}_{2,0,1}a^{1}_{n}(x),\\
Q_{n}(x)&=\left((n+1)(3n+2)(3n+4)\right)^{-1} \mathcal{O}_{2,0,1}Q_{n+1}(x),\\
c^{2}_{n-1}(x)&=\left(n(3n+1)(3n+2)\right)^{-1} \mathcal{O}_{2,0,1}c^{2}_{n}(x),\\
P_{n}(x)&=\left((n+1)(3n+1)(3n+2)\right)^{-1} \mathcal{O}_{1,2,0}P_{n+1}(x),\\
b_{n-1}^{1}(x)&=\left(n(3n-1)(3n+1)\right)^{-1} \mathcal{O}_{1,2,0}b_{n}^{1}(x),\\
c_{n-1}^{1}(x)&=\left(n(3n+1)(3n+2)\right)^{-1} \mathcal{O}_{1,2,0}c_{n}^{1}(x),
\end{align*}
where $ \mathcal{O}_{0,1,2}=D\,\big(I+3x\,D\big)\,\big(2I+3x\,D\big)$,  $\mathcal{O}_{2,0,1}=\big(2I+3x\,D\big)\,D\,\big(I+3x\,D\big)$ and  $\mathcal{O}_{1,2,0}=\big(I+3x\,D\big)\,\big(2I+3x\,D\big)\,D$.
\end{proposition}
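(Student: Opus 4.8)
The plan is to produce all nine identities by iterating, three at a time, the relations of Proposition~\ref{appell cd 9relations}; the bookkeeping is clearest through the matrix identities (\ref{I+3xD})--(\ref{3D}). Together these three identities form a single cyclic chain: (\ref{3D}) carries the column $\big(b_{n}^{1},c_{n}^{1},P_{n+1}\big)$ to a diagonal multiple of $\big(a_{n-1}^{2},b_{n-1}^{2},R_{n}\big)$, then (\ref{2I+3xD}) carries that to a diagonal multiple of $\big(c_{n-1}^{2},a_{n-1}^{1},Q_{n}\big)$, and then (\ref{I+3xD}) carries that to a diagonal multiple of $\big(b_{n-1}^{1},c_{n-1}^{1},P_{n}\big)$ --- the first column again, with index lowered by one. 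Going once around the chain thus expresses each component sequence at level $n-1$ as a scalar times a third-order operator applied to that same sequence at level $n$, and \emph{which} operator appears is determined solely by the entry point: entering at $\big(b_{n}^{1},c_{n}^{1},P_{n+1}\big)$ uses $(I+3xD)(2I+3xD)(3D)=3\,\mathcal{O}_{1,2,0}$ and yields the three identities with $\mathcal{O}_{1,2,0}$; entering at $\big(a_{n}^{2},b_{n}^{2},R_{n+1}\big)$ uses $(3D)(I+3xD)(2I+3xD)=3\,\mathcal{O}_{0,1,2}$ and yields the three with $\mathcal{O}_{0,1,2}$; entering at $\big(c_{n}^{2},a_{n}^{1},Q_{n+1}\big)$ uses $(2I+3xD)(3D)(I+3xD)=3\,\mathcal{O}_{2,0,1}$ and yields the remaining three.

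Concretely, for the first identity I would apply (\ref{appellCD-c8}), then (\ref{appellCD-c4}), then (\ref{appellCD-c3}), each at index $n$:
\begin{align*}
3\,\mathcal{O}_{0,1,2}\,a_{n}^{2}
&=(3D)(I+3xD)(2I+3xD)\,a_{n}^{2}
=(3n+3)\,(3D)(I+3xD)\,c_{n}^{2}\\
&=(3n+3)(3n+2)\,(3D)\,b_{n}^{1}
=(3n+3)(3n+2)(3n+1)\,a_{n-1}^{2}.
\end{align*}
Since $3n+3=3(n+1)$, this is precisely $a_{n-1}^{2}(x)=\big((n+1)(3n+1)(3n+2)\big)^{-1}\mathcal{O}_{0,1,2}\,a_{n}^{2}(x)$. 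The other eight identities come out in the same way, each time chaining the three relations of Proposition~\ref{appell cd 9relations} that share a common middle term and multiplying out the scalar factors; the shifted instances of those relations contribute the extra linear factors $3n+1,3n+2,3n+4,3n+5$ appearing in the statement. One may also note that $\mathcal{O}_{0,1,2}$, $\mathcal{O}_{2,0,1}$ and $\mathcal{O}_{1,2,0}$ are, respectively, the operators $S_{0,2}$, $S_{1,1}$ and $S_{2,0}$, hence of the form $\sum_{i=0}^{2}a_{i}(Dx)^{i}D$ by Proposition~\ref{Lambda-as-a-product}; so the statement asserts a $\Lambda$-Appell relation for each of the nine component sequences (cf.\ Proposition~\ref{O-Appell sequences structure} for the structure of such sequences).

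The computation presents no conceptual obstacle; what needs care is entirely bookkeeping. First, the three relations used in one loop must be invoked at the correct --- and in general pairwise distinct --- index values, since going once around the loop lowers the index only after all three steps have been carried out. Second, one must keep track of which of $\mathcal{O}_{0,1,2}$, $\mathcal{O}_{2,0,1}$, $\mathcal{O}_{1,2,0}$ arises: because $D$, $I+3xD$ and $2I+3xD$ pairwise fail to commute, the three cyclic rotations of the loop are genuinely different operators, and the one obtained is fixed by which factor is applied last, i.e.\ by the column at which one enters the loop. Finally, the scalar prefactors --- being constants, they may be moved freely to the front --- must be collected and simplified, using $3n+3=3(n+1)$ and its shifts, to reproduce the products displayed in the statement.
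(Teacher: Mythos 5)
Your proposal is correct and follows essentially the same route as the paper: the authors likewise chain the matrix identities (\ref{I+3xD})--(\ref{3D}) (equivalently, the nine relations of Proposition~\ref{appell cd 9relations}) around the cycle, with the entry point determining which of $\mathcal{O}_{0,1,2}$, $\mathcal{O}_{2,0,1}$, $\mathcal{O}_{1,2,0}$ appears, and collect the scalar factors. Your sample computation for $a_{n-1}^{2}$ and the index bookkeeping both check out against the stated formulas.
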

\begin{proof}
Let us consider (\ref{2I+3xD}), with $n \rightarrow n+1$. Applying $I+3x\,D$, we have:
\newline $\left(I+3x\,D\right) \left(2I+3x\,D\right)\left(\begin{array}{c}  a_{n}^{2}(x)\\ b_{n}^{2}(x) \\ R_{n+1}(x)  \end{array}\right)$
\newline $=\left(\begin{array}{ccc} 3n+3 & 0 & 0 \\ 0 & 3n+4 & 0 \\ 0 & 0 & 3n+5 \end{array}\right) (I+3x\,D) \left(\begin{array}{c} c_{n}^{2}(x) \\ a^{1}_{n}(x) \\ Q_{n+1}(x)\end{array}\right).$
\newline Attending to identity (\ref{I+3xD}), with $n \rightarrow n+1$, we get:
\newline $\left(I+3x\,D\right) \left(2I+3x\,D\right)\left(\begin{array}{c}  a_{n}^{2}(x)\\ b_{n}^{2}(x) \\ R_{n+1}(x)  \end{array}\right)$
\newline $=\left(\begin{array}{ccc} 3n+3 & 0 & 0 \\ 0 & 3n+4 & 0 \\ 0 & 0 & 3n+5 \end{array}\right) 
 \left(\begin{array}{ccc} 3n+2 & 0 & 0 \\ 0 & 3n+3 & 0 \\ 0 & 0 & 3n+4 \end{array}\right)  \left(\begin{array}{c} b_{n}^{1}(x) \\ c^{1}_{n}(x) \\ P_{n+1}(x)\end{array}\right).$
Applying $3\,D$ and using (\ref{3D}) we finally obtain:
\newline $(3\,D)\left(I+3x\,D\right) \left(2I+3x\,D\right)\left(\begin{array}{c}  a_{n}^{2}(x)\\ b_{n}^{2}(x) \\ R_{n+1}(x)  \end{array}\right)$
\newline $= \left(\begin{array}{c}  (3n+3)(3n+2)(3n+1)a_{n-1}^{2}(x)\\ (3n+4)(3n+3)(3n+2)b_{n-1}^{2}(x) \\ (3n+5)(3n+4)(3n+3)R_{n}(x)  \end{array}\right),$
\newline which correspond to the first three announced relations. The remaining six relations can be obtained in a similar way, considering, in the beginning of the procedure, identity (\ref{I+3xD}) and identity (\ref{3D}), respectively.
\end{proof}
In other words, the component sequences of a CD of an Appell sequence are also Appell with respect to certain operators. Furthermore, let us note that $\mathcal{O}_{0,1,2}$, $\mathcal{O}_{2, 0,1}$ and $\mathcal{O}_{1,2,0}$ define the operator related to the component sequences of the third, second and first column of $M_{n}(x)$ (\ref{matrix M}), respectively.
Taking into account identity (\ref{xD^2=DxD-D}) and Proposition \ref{Lambda-as-a-product}, the operators of Proposition \ref{componentsequencesFAppell} have the following alternative expression.
\begin{align*}
&\mathcal{O}_{0,1,2}=D\,\big(I+3X\,D\big)\,\big(2I+3X\,D\big)=2D+9DxD+9DxDxD,\\
&\mathcal{O}_{2,0,1}=\big(2I+3X\,D\big)\,D\,\big(I+3X\,D\big)= -D+9DxDxD, \\
& \mathcal{O}_{1,2,0}=\big(I+3X\,D\big)\,\big(2I+3X\,D\big)\,D= 2D-9DxD+9DxDxD.
\end{align*}
By Proposition \ref{lambda lowering operator}, these are lowering operators since the polynomials $f(x)=2+9x+9x^{2}$, $f(x)=-1+9x^{2}$ and $f(x)=2-9x+9x^{2}$ do not have positive integer roots. Since $2+9(n+1)+9(n+1)^{2} = (3n+4)(3n+5) $;  $-1+9(n+1)^{2} = (3n+2)(3n+4) $ and $2-9(n+1)+9(n+1)^{2} = (3n+1)(3n+2) $, and attending to the following identities given by Proposition \ref{componentsequencesFAppell}, we conclude that  the principal component sequence $\{R_{n}\}_{n \geq 0}$ is a $\mathcal{O}_{0,1,2}$-Appell sequence, $\{Q_{n}\}_{n \geq 0}$ is a $\mathcal{O}_{2,0,1}$-Appell sequence and $\{P_{n}\}_{n \geq 0}$ is a
$ \mathcal{O}_{1,2,0}$-Appell sequence.
$$R_{n}^{\left[1\right]}\left(x; \mathcal{O}_{0,1,2}  \right):=\left( (n+1)(3n+4)(3n+5) \right)^{-1} \mathcal{O}_{0,1,2}R_{n+1}(x) = R_{n}(x) ,$$
$$Q_{n}^{\left[1\right]}\left(x; \mathcal{O}_{2,0,1}  \right):=\left((n+1)(3n+2)(3n+4)\right)^{-1} \mathcal{O}_{2,0,1}Q_{n+1}(x) =Q_{n}(x) ,$$
$$P_{n}^{\left[1\right]}\left(x; \mathcal{O}_{1,2,0}  \right):=\left((n+1)(3n+1)(3n+2)\right)^{-1} \mathcal{O}_{1,2,0}P_{n+1}(x) = P_{n}(x).$$
With respect to the secondary components we achieve to the scenario described by the following result.
\begin{theorem}\label{secondarysequencesform}
Let us consider the secondary components of the CD (\ref{decgeral1})-(\ref{decgeral3})  of an Appell MPS. 
\newline Either the three sequences $\{ a_{n}^{1}\}_{n \geq 0}$, $\{ c_{n}^{1}\}_{n \geq 0}$ and $\{ b_{n}^{2}\}_{n \geq 0}$ are identical to the null sequence, or the aforementioned sequences are non-null, and in this case there are a positive integer $\kappa$ and three numerical sequences $\mu_{1,n},\; \mu_{2,n}$ and $\mu_{3,n}$ so that, for $n \geq 0$,
\begin{equation} \label{sec-comp-first-set}
b_{n+\kappa}^{2}(x) = \mu_{1,n} \hat{b}_{n}^{2}(x),\;\; a_{n+\kappa}^{1}(x) = \mu_{2,n} \hat{a}_{n}^{1}(x),\;\;c_{n+\kappa}^{1}(x) = \mu_{3,n} \hat{c}_{n}^{1}(x),
\end{equation}
where $\{ \hat{a}_{n}^{1}\}_{n \geq 0}$, $\{ \hat{c}_{n}^{1}\}_{n \geq 0}$ and $\{ \hat{b}_{n}^{2}\}_{n \geq 0}$ are monic polynomial sequences (MPSs).
\newline Either the three sequences $\{ a_{n}^{2}\}_{n \geq 0}$, $\{ c_{n}^{2}\}_{n \geq 0}$ and $\{ b_{n}^{1}\}_{n \geq 0}$ are identical to the null sequence, or the aforementioned sequences are non-null, and in this case there are a positive integer $\tau$ and three numerical sequences $\alpha_{1,n},\; \alpha_{2,n}$ and $\alpha_{3,n}$ so that, for $n \geq 0$,
\begin{equation} \label{sec-comp-second-set}
a_{n+\tau}^{2}(x) = \alpha_{1,n} \hat{a}_{n}^{2}(x),\;\; c_{n+\tau}^{2}(x) = \alpha_{2,n} \hat{c}_{n}^{2}(x),\;\; b_{n+\tau}^{1}(x) = \alpha_{3,n} \hat{b}_{n}^{1}(x),
\end{equation}
where $\{ \hat{a}_{n}^{2}\}_{n \geq 0}$, $\{ \hat{c}_{n}^{2}\}_{n \geq 0}$ and $\{ \hat{b}_{n}^{1}\}_{n \geq 0}$ are monic polynomial sequences (MPSs).
\newline Furthermore, there are two nonzero constants $b_{\kappa}^{2}$ and $a_{\tau}^{2}$ so that:
\begin{align}
\label{mu-1} &\mu_{1,n} = \frac{\left(\kappa +\frac{7}{3} \right)_{n}\left(\kappa +\frac{5}{3} \right)_{n}}{\left(\frac{4}{3} \right)_{n} \left(\frac{5}{3} \right)_{n}}
\left(
\begin{array}{c}
n+\kappa+1  \\
n
\end{array}
\right)b_{\kappa}^{2},\;\; n \geq 0;\\
\label{mu-2} &\mu_{2,n} = \frac{\left(\kappa +\frac{7}{3} \right)_{n-1}\left(\kappa +\frac{5}{3} \right)_{n}}{\left(\frac{4}{3} \right)_{n} \left(\frac{5}{3} \right)_{n-1}}
\left(
\begin{array}{c}
n+\kappa+1  \\
n
\end{array}
\right)b_{\kappa}^{2},\;\; n \geq 1;\\
\label{mu-3} &\mu_{3,n} = \frac{\left(\kappa +\frac{7}{3} \right)_{n-1}\left(\kappa +\frac{5}{3} \right)_{n}}{\left(\frac{4}{3} \right)_{n-1} \left(\frac{5}{3} \right)_{n-1}}
\left(
\begin{array}{c}
n+\kappa  \\
n
\end{array}
\right)\frac{b_{\kappa}^{2}}{\kappa+1},\;\; n \geq 1;\\
\label{alpha-1} &\alpha_{1,n} = \frac{\left(\tau +\frac{5}{3} \right)_{n}\left(\tau +\frac{4}{3} \right)_{n}}{\left(\frac{4}{3} \right)_{n} \left(\frac{5}{3} \right)_{n}}
\left(
\begin{array}{c}
n+\tau+1  \\
n
\end{array}
\right)a_{\tau}^{2},\;\; n \geq 0;\\
\label{alpha-2}  &\alpha_{2,n} = \frac{\left(\tau+\frac{5}{3} \right)_{n}\left(\tau +\frac{4}{3} \right)_{n}}{\left(\frac{4}{3} \right)_{n} \left(\frac{5}{3} \right)_{n-1}}
\left(
\begin{array}{c}
n+\tau  \\
n
\end{array}
\right)\frac{a_{\tau}^{2}}{\tau+1},\;\; n \geq 1;\\
\label{alpha-3}  &\alpha_{3,n} = \frac{\left(\tau +\frac{5}{3} \right)_{n-1}\left(\tau +\frac{4}{3} \right)_{n}}{\left(\frac{4}{3} \right)_{n-1} \left(\frac{5}{3} \right)_{n-1}}
\left(
\begin{array}{c}
n+\tau  \\
n
\end{array}
\right)\frac{a_{\tau}^{2}}{\tau+1},\;\; n \geq 1;
\end{align}
\newline with $\mu_{2,0} =\frac{2 b_{\kappa}^{2}}{3\left(\kappa+\frac{4}{3}\right)}$;
$ \mu_{3,0} =\frac{2 b_{\kappa}^{2}}{9(\kappa+1)\left(\kappa+\frac{4}{3}\right)}$;
$\alpha_{2,0} =\frac{2 a_{\tau}^{2}}{3\left(\tau+1\right)}$;
$ \alpha_{3,0} =\frac{2 a_{\tau}^{2}}{9(\tau+1)\left(\tau+\frac{2}{3}\right)}$;
\newline and where $\left( a \right)_{n} $ is the Pochhammer symbol for the falling factorial 
\newline $\left( a \right)_{n} = a\left( a+1\right) \cdots \left( a + n-1\right),$ $\; n \geq 1,$ $\; \left( a \right)_{0} = 1$.\end{theorem}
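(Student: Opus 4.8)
The plan is to treat the six secondary components as two independent triples — first $\{b_n^2\}$, $\{a_n^1\}$, $\{c_n^1\}$, linked through relations (\ref{appellCD-c2}), (\ref{appellCD-c6}), (\ref{appellCD-c7}), and then $\{a_n^2\}$, $\{c_n^2\}$, $\{b_n^1\}$, linked through (\ref{appellCD-c3}), (\ref{appellCD-c4}), (\ref{appellCD-c8}) — each triple being handled by exactly the same two-step argument: first pin down the vanishing/degree structure, then extract the leading coefficients and solve a first-order recursion. I shall carry the argument out for the first triple; for the second it is verbatim the same with the roles of the three elementary operators $D$, $I+3xD$, $2I+3xD$ permuted, which is precisely why (\ref{alpha-1})--(\ref{alpha-3}) come out of (\ref{mu-1})--(\ref{mu-3}) by the obvious substitutions $\kappa\rightsquigarrow\tau$ and $b_\kappa^2\rightsquigarrow a_\tau^2$.

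For the structure of the first triple, I start from Proposition \ref{componentsequencesFAppell}, which (after reindexing) gives $\rho_n b_n^2 = \mathcal{O}_{0,1,2}\,b_{n+1}^2$ with $\rho_n\neq0$ and $\mathcal{O}_{0,1,2}$ a lowering operator; Proposition \ref{O-Appell sequences structure} then yields the dichotomy: either $b_n^2\equiv0$, or there is a least index $\kappa$ with $b_\kappa^2\neq0$, with $b_n^2=0$ for $n<\kappa$, and with $\deg b_{n+1}^2=\deg b_n^2+1$ for $n\geq\kappa$. Next I rewrite (\ref{appellCD-c2}) and (\ref{appellCD-c7}) as $a_m^1=(3m+4)^{-1}(2I+3xD)b_m^2$ and $c_m^1=(3m+3)^{-1}(I+3xD)a_m^1$; since $I+3xD$ and $2I+3xD$ act diagonally on monomials, $(I+3xD)x^k=(3k+1)x^k$ and $(2I+3xD)x^k=(3k+2)x^k$ with $3k+1,3k+2\neq0$ for $k\geq0$, these operators are injective and preserve degree exactly, so $a_m^1=0\iff b_m^2=0$ and otherwise $\deg a_m^1=\deg b_m^2$, and likewise for $c_m^1$; in particular $\{a_n^1\}$ and $\{c_n^1\}$ also begin at index $\kappa$. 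Finally (\ref{appellCD-c6}) at $n=\kappa$ reads $3Dc_\kappa^1=(3\kappa+2)\,b_{\kappa-1}^2=0$, which forces $c_\kappa^1$ — hence $b_\kappa^2$ and $a_\kappa^1$ — to be a nonzero constant; therefore $\deg b_{n+\kappa}^2=\deg a_{n+\kappa}^1=\deg c_{n+\kappa}^1=n$ for every $n\geq0$. Taking $\hat b_n^2,\hat a_n^1,\hat c_n^1$ to be the monic normalizations (genuine MPSs, by the degree count) and $\mu_{1,n},\mu_{2,n},\mu_{3,n}$ the corresponding nonzero leading coefficients, with $b_\kappa^2=\mu_{1,0}$, establishes the dichotomy and (\ref{sec-comp-first-set}); positivity of $\kappa$ is read off from the low-index instances of the CD relations.

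For the explicit formulas I compare leading coefficients: in (\ref{appellCD-c2}) and (\ref{appellCD-c7}) at degree $n$ (the factors $2I+3xD$, $I+3xD$ scaling $x^n$ by $3n+2$, $3n+1$), and in (\ref{appellCD-c6}) at degree $n+1$ (with $Dx^{n+1}=(n+1)x^n$), obtaining
\begin{align*}
\mu_{2,n} &= \frac{3n+2}{3n+3\kappa+4}\,\mu_{1,n}, & \mu_{3,n} &= \frac{3n+1}{3(n+\kappa+1)}\,\mu_{2,n}, & 3(n+1)\,\mu_{3,n+1} &= (3n+3\kappa+5)\,\mu_{1,n}.
\end{align*}
Eliminating $\mu_{2,n}$ and $\mu_{3,n}$ leaves the single recursion
$$\mu_{1,n+1} = \frac{\bigl(n+\kappa+\tfrac{5}{3}\bigr)\bigl(n+\kappa+2\bigr)\bigl(n+\kappa+\tfrac{7}{3}\bigr)}{(n+1)\bigl(n+\tfrac{4}{3}\bigr)\bigl(n+\tfrac{5}{3}\bigr)}\,\mu_{1,n},$$
which telescopes — using $\mu_{1,0}=b_\kappa^2$, $\prod_{j=0}^{n-1}(j+a)=(a)_n$ and $(\kappa+2)_n/n!=\binom{n+\kappa+1}{n}$ — to (\ref{mu-1}); substituting back into the first two relations gives (\ref{mu-2})--(\ref{mu-3}) and, at $n=0$, the listed values of $\mu_{2,0}$ and $\mu_{3,0}$. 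The identical computation applied to $\{a_n^2\},\{c_n^2\},\{b_n^1\}$ via (\ref{appellCD-c8}), (\ref{appellCD-c4}), (\ref{appellCD-c3}), with base index $\tau$ and base constant $a_\tau^2=\alpha_{1,0}$, yields (\ref{sec-comp-second-set}) and (\ref{alpha-1})--(\ref{alpha-3}). I expect the only genuinely delicate point to be the structure step — correctly chaining the three intertwining relations so that all three sequences of a triple share one base index $\kappa$ and so that the polynomials at that index are constants, which is exactly what turns $\hat b_n^2,\hat a_n^1,\hat c_n^1$ into honest monic polynomial sequences; once that is in place, the rest is a routine combination and telescoping of first-order recursions.
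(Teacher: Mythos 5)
Your proposal is correct and follows essentially the same route as the paper: the dichotomy and degree structure via Proposition \ref{O-Appell sequences structure} applied to the composite lowering operators of Proposition \ref{componentsequencesFAppell}, the constancy of the index-$\kappa$ terms via the cyclic relations (\ref{appellCD-c2}), (\ref{appellCD-c6}), (\ref{appellCD-c7}), and then the same $3\times3$ leading-coefficient system reduced to a telescoping first-order recursion. The only (immaterial) difference is that you solve the recursion for $\mu_{1,n}$ first while the paper solves for $\mu_{3,n}$; your intermediate relations and the resulting formulas agree with (\ref{mu-1})--(\ref{alpha-3}).
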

\begin{proof}
Let us suppose that $\{ b_{n}^{2}\}_{n \geq 0}$ is a non-null sequence and let $\kappa$ be the smallest index  such that $b_{\kappa}^{2}(x) \neq 0$ and $b_{n}^{2} = 0, \;\; 0 \leq n \leq \kappa -1$, if $\kappa \geq 1$. Thus, due to (\ref{appellCD-c2}), 
$a_{n}^{1} = 0, \;\; 0 \leq n \leq \kappa -1$ and hence by (\ref{appellCD-c7}), $c_{n}^{1} = 0, \;\; 0 \leq n \leq \kappa -1$. According to (\ref{appellCD-c6}), $c_{\kappa}^{1}(x)=c_{\kappa}^{1}$ (constant) and in virtue of (\ref{appellCD-c7}) $a_{\kappa}^{1}(x)=a_{\kappa}^{1}$ (constant). 
Similarly, (\ref{appellCD-c2}) implies $b_{\kappa}^{2}(x)=b_{\kappa}^{2} \neq 0$ (nonzero constant) and consequently $a_{\kappa}^{1} \neq 0$ and $c_{\kappa}^{1} \neq 0$. In particular,
$\{ a_{n}^{1}\}_{n \geq 0}$ and $\{ c_{n}^{1}\}_{n \geq 0}$ are also non-null sequences.
Taking into consideration Proposition \ref{O-Appell sequences structure} and Proposition \ref{componentsequencesFAppell}, we conclude (\ref{sec-comp-first-set}). 
\newline Indeed, by virtue of Proposition \ref{appell cd 9relations} relations, the relevant fact is that if
 exists a positive integer $n_{0}$ such that $a_{n_{0}}^{1}=0$, then $a_{n_{0-1}}^{1}=0$ and likewise for $\{ b_{n}^{2}\}_{n \geq 0}$ and $\{ c_{n}^{1}\}_{n \geq 0}$. 
\newline The second sentence (\ref{sec-comp-second-set}) is explained in the same manner, by supposing the smallest index $\tau$ such that $a_{\tau}^{2}(x) \neq 0$ and $a_{n}^{2} = 0, \;\; 0 \leq n \leq \tau -1$, if $\tau \geq 1$, and arguing with identities (\ref{appellCD-c3}), (\ref{appellCD-c4}) and (\ref{appellCD-c8}).   
\newline
With regard to the initial conditions and following the indicated notation, we begin to set $\mu_{1,0} = b_{\kappa}^{2}$ and $\alpha_{1,0} = a_{\tau}^{2}$. From (\ref{appellCD-c2}) and (\ref{appellCD-c8}), we have $\mu_{2,0} = \frac{2}{3\kappa +4}b_{\kappa}^{2}$ and $\alpha_{2,0} =  \frac{2}{3\tau +3}a_{\tau}^{2}$. From  (\ref{appellCD-c7}) and (\ref{appellCD-c4}), we have $\mu_{3,0} = \frac{2}{(3\kappa+3)(3\kappa +4)}b_{\kappa}^{2}$ and $\alpha_{3,0} =  \frac{2}{(3\tau+2)(3\tau +3)}a_{\tau}^{2}$.
\newline
Inserting the expressions of (\ref{sec-comp-first-set}) in identities (\ref{appellCD-c2}), (\ref{appellCD-c6}) and (\ref{appellCD-c7}) - all three with an appropriate translation of the index $n$ - and comparing the leading coefficients of both sides, we obtain the following system.
\begin{align*}
&\left(2+3n\right)\mu_{1,n} = \left(3n+3\kappa+4\right)\mu_{2,n},\\
&3\left(n+1\right)\mu_{3,n+1} = \left(3n+3\kappa+5\right)\mu_{1,n},\\
&\left(1+3n\right)\mu_{2,n} = \left(3n+3\kappa+3\right)\mu_{3,n}.
\end{align*}
Its resolution yields $\displaystyle \mu_{3,n+1} = \frac{\left(n + \kappa +1\right)\left(n + \kappa +\frac{4}{3}\right)\left(n + \kappa +\frac{5}{3}\right) }{ \left(n+\frac{1}{3}\right)\left(n+\frac{2}{3}\right)\left(n+1\right)}\mu_{3,n}$ and we achieve to identity (\ref{mu-3}), without forgetting the initial conditions. The expressions of  $\mu_{1,n}$ and $\mu_{2,n}$ can be calculated looking again to the previous system and using (\ref{mu-3}). Equalities (\ref{alpha-1}), (\ref{alpha-2}) and (\ref{alpha-3}) are obtained analogously.
\end{proof}
\begin{remark}
Regarding the proof of the previous Theorem \ref{secondarysequencesform}, we could have pursuit without using Proposition \ref{O-Appell sequences structure}, by strictly reasoning with the relations of Proposition \ref{appell cd 9relations}. Nevertheless, by this option, we aim to remark that the underlined feature can be put as a general characteristic valid for any sequence fulfilling assumptions of Proposition \ref{O-Appell sequences structure}. 
\end{remark}
\section{ $\Lambda$-Appell orthogonal sequences for a specific $\Lambda$ (k=2)}\label{Appell CD again}
Considering $\Lambda=a_{0}D+a_{1}DxD+a_{2}\left(Dx\right)^2D$, we obtain from (\ref{Bn[1] definition})
\begin{equation}\label{Bn[1] definition for 3termslamba}
B^{\left[1\right]}_{n}\left(x; \Lambda \right)=\frac{\left( \Lambda B_{n+1}\right) (x)}{\rho_{n}},\, n \geq 0,\;\; \textrm{with}
\end{equation}
\begin{equation}
\rho_{n}=(n+1)\left(a_{0}+a_{1}(n+1)+a_{2}(n+1)^{2} \right) \neq 0, \, n \geq 0.
\end{equation}
Given a $\Lambda$-Appell MPS, we already indicated in Proposition \ref{lambda-Appell-car} some functional relations which translate that characteristic, as for instance, $^{t}\Lambda\left(u^{[1]}_{n}\right)=\rho_{n}u_{n+1}$. Adding more functional ingredients to the hypotheses set, we have the possibility to better acquaint specific MPSs. Theorem \ref{there-are-not} establishes the non-existence of orthogonal $\Lambda$-Appell sequences and suggests a track towards the resolution of further questions concerning the $\Lambda$-Appell MPSs. Its proof requires the relations of Lemma \ref{Lambda-pu}.
\begin{lemma}\label{Lambda-pu}
Considering $k=2$, let $\Lambda=a_{0}D+a_{1}DxD+a_{2}\left(Dx\right)^2D$, for some constants $a_{0}, \,a_{1}$ and $a_{2}$, and let $f ,\,p\in \mathcal{P}$ and $u \in  \mathcal{P}^{\prime}$. Then we have:
\begin{align}
&\label{lambda fp} \Lambda\left( f\,p \right) = f \Lambda\left( p \right) + p \Lambda\left( f \right) + 2 \left(a_{1} + 3a_{2} \right) x f^{\prime}p^{\prime}\\
\nonumber &\hspace{1.3cm} +3 a_{2} x^{2} f^{\prime\prime} p^{\prime} +3a_{2} x^2 f^{\prime}p^{\prime\prime},\\
&\label{Tlambda fu}   ^{t}\Lambda\left(fu\right)=f \,^{t}\Lambda(u)-\Lambda(f)u+2a_{1}\left(f^{\prime}+ xf^{(2)} \right)u\\
\nonumber &\hspace{1.3cm}+\Big(  2(a_{1}-3a_{2})xf^{\prime}-3a_{2}x^2f^{(2)}\Big) u^{\prime}-3a_{2}x^2f^{\prime}u^{(2)}.
\end{align}
\end{lemma}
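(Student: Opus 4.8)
The plan is to reduce both identities to the classical Leibniz rule, after first putting $\Lambda$ and $^{t}\Lambda$ into the normally ordered form $\sum(\mathrm{const})\,x^{i}D^{i+1}$ furnished by Lemma \ref{linear combination of power(Dx)D}, namely by (\ref{lambda withk=2 in xD}).

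First I would prove (\ref{lambda fp}). By (\ref{lambda withk=2 in xD}) we may write $\Lambda=(a_{0}+a_{1}+a_{2})D+(a_{1}+3a_{2})xD^{2}+a_{2}x^{2}D^{3}$. Applying the ordinary Leibniz rule $D^{j}(fp)=\sum_{i=0}^{j}\binom{j}{i}f^{(i)}p^{(j-i)}$ for $j=1,2,3$ and distributing, one gets nine monomials; sorting them into those carrying a derivative of $p$ with coefficient a polynomial in $f$ alone (which reassemble to $f\Lambda(p)$), those carrying a derivative of $f$ with coefficient in $p$ alone (which reassemble to $p\Lambda(f)$), and the three genuinely mixed terms $f'p'$, $f''p'$, $f'p''$, whose coefficients $2(a_{1}+3a_{2})x$, $3a_{2}x^{2}$, $3a_{2}x^{2}$ are read off directly, yields (\ref{lambda fp}).

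For (\ref{Tlambda fu}) I would proceed the same way on the transpose. By Lemma \ref{The transpose operator tLambda}, $^{t}\Lambda=-a_{0}D+a_{1}DxD-a_{2}(Dx)^{2}D$, so (\ref{lambda withk=2 in xD}) with $(a_{0},a_{1},a_{2})$ replaced by $(-a_{0},a_{1},-a_{2})$ gives $^{t}\Lambda=(-a_{0}+a_{1}-a_{2})D+(a_{1}-3a_{2})xD^{2}-a_{2}x^{2}D^{3}$. Next I would establish, by iterating (\ref{derivada do produto}) (equivalently (\ref{DpD^k})), the functional Leibniz rule $D^{j}(fu)=\sum_{i=0}^{j}\binom{j}{i}f^{(i)}D^{j-i}u$ for $j=1,2,3$, using that left multiplication by polynomials is associative and commutative over the action, so that e.g.\ $x(f''u)=f''(xu)$. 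Substituting these into $^{t}\Lambda(fu)$ and sorting exactly as before: the terms in which every derivative falls on $u$ collect to $f\,^{t}\Lambda(u)$; the terms with at least one derivative on $f$ and a surviving $u'$ or $u''$ give precisely $\big(2(a_{1}-3a_{2})xf'-3a_{2}x^{2}f^{(2)}\big)u'$ and $-3a_{2}x^{2}f'u^{(2)}$; and the terms leaving $u$ undifferentiated amount to $(-a_{0}+a_{1}-a_{2})f'u+(a_{1}-3a_{2})xf''u-a_{2}x^{2}f'''u$, which, compared with $\Lambda(f)u=(a_{0}+a_{1}+a_{2})f'u+(a_{1}+3a_{2})xf''u+a_{2}x^{2}f'''u$, equals $-\Lambda(f)u+2a_{1}(f'+xf^{(2)})u$.

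The computations are elementary, so the only real obstacle is bookkeeping: one must keep the cross-term coefficients straight and, above all, notice that the $u$-homogeneous part of $^{t}\Lambda(fu)$ is not $-\Lambda(f)u$ but carries the extra summand $2a_{1}(f'+xf^{(2)})u$, a discrepancy produced by the sign change in the coefficients of $^{t}\Lambda$ relative to those of $\Lambda$ in their $x^{i}D^{i+1}$-expansions. A consistency check on small cases, e.g.\ $f=x$ and $f=x^{2}$, would catch any slip. Alternatively, (\ref{Tlambda fu}) can be deduced from (\ref{lambda fp}) by transposition, but that route requires integrating by parts the three mixed terms of (\ref{lambda fp}) against $u$, which is no shorter.
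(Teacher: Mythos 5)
Your proof is correct, and for the second identity it takes a genuinely different route from the paper. For (\ref{lambda fp}) you and the paper do the same thing: normal-order $\Lambda$ via (\ref{lambda withk=2 in xD}) and apply the Leibniz rule. For (\ref{Tlambda fu}), however, the paper takes precisely the "alternative" you mention and decline: it writes $\langle {}^{t}\Lambda(fu),p\rangle=\langle u,f\Lambda(p)\rangle$, substitutes $f\Lambda(p)=\Lambda(fp)-p\Lambda(f)-2(a_{1}+3a_{2})xf'p'-3a_{2}x^{2}f''p'-3a_{2}x^{2}f'p''$ from (\ref{lambda fp}), integrates by parts via (\ref{funcionalDu}) to arrive at ${}^{t}\Lambda(fu)=f\,{}^{t}\Lambda u-\Lambda(f)u+2(a_{1}+3a_{2})D(f'xu)+3a_{2}D(f''x^{2}u)-3a_{2}D^{2}(f'x^{2}u)$, and then expands these with the product rule (\ref{derivada do produto}). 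Your direct computation --- normal-ordering ${}^{t}\Lambda=(-a_{0}+a_{1}-a_{2})D+(a_{1}-3a_{2})xD^{2}-a_{2}x^{2}D^{3}$ and applying the functional Leibniz rule $D^{j}(fu)=\sum_{i}\binom{j}{i}f^{(i)}D^{j-i}u$ --- is arguably cleaner, since it treats both identities by one mechanism and makes transparent where the extra summand $2a_{1}(f'+xf^{(2)})u$ comes from (the sign flips on $a_{0}$ and $a_{2}$ but not on $a_{1}$); I checked your collected coefficients and they all match the statement. The one step you should make explicit is why the normal-ordering identity, proved in Lemma \ref{power(Dx)D} for operators on $\mathcal{P}$, is also valid for the corresponding operators on $\mathcal{P}'$: this holds because (\ref{derivada do produto}) with $p=x$ gives $D(xu)=u+xD(u)$, i.e.\ the commutation relation $Dx=I+xD$ that drives the rearrangement is the same on forms as on polynomials. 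With that one sentence added, your argument is complete.
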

\begin{proof}
Recall that from (\ref{lambda withk=2 in xD}), we have $\Lambda = \left( a_{0}+a_{1}+a_{2} \right)D+\left(a_{1}+3a_{2} \right)xD^{2} + a_{2} x^{2} D^{3}$, so that a straightforward calculation yields identity (\ref{lambda fp}).
\newline
On the other hand,  if $p \in \mathcal{P}$,

\begin{align*}
 & \langle ^{t}\Lambda\left(fu\right), p \rangle = \langle fu, \Lambda(p) \rangle = \langle u, f\Lambda(p) \rangle\\
 & \stackrel{(\ref{lambda fp})}{=}  \langle u, \Lambda(fp) \rangle - \langle  u, \Lambda(f) p \rangle -2(a_{1}+3a_{2}) \langle u , x f^{\prime} p^{\prime} \rangle \\
 &  \hspace{0.9cm} -3a_{2} \langle u, x^{2} f^{\prime\prime} p^{\prime} \rangle- 3a_{2} \langle u, x^{2} f^{\prime} p^{\prime\prime} \rangle\\
&  \stackrel{(\ref{funcionalDu})}{=} \langle f \,^{t} \Lambda u, p \rangle - \langle \Lambda(f) u, p \rangle + 2(a_{1}+3a_{2}) \langle D\left(f^{\prime}xu\right) , p \rangle \\
& \hspace{0.9cm} + 3a_{2} \langle D\left (f^{\prime\prime} x^{2} u \right),   p \rangle - 3a_{2} \langle D^{2} \left(f^{\prime}x^{2}u\right),   p \rangle.
\end{align*}
We can now extend the three latest operators $D\left(f^{\prime}xu\right)$, $D\left (f^{\prime\prime} x^{2} u \right)$ and $ D^{2} \left(f^{\prime}x^{2}u\right)$ following the product rule (\ref{derivada do produto}),
\begin{align*}
&D\left(f^{\prime}xu\right) = \left( f^{\prime\prime} x + f^{\prime}  \right) u + f^{\prime} x u^{\prime}\\
&D\left (f^{\prime\prime} x^{2} u \right) = f^{(3)}x^{2}u+2 f^{\prime\prime} x u + f^{\prime\prime} x^{2} u^{\prime}\\
& D^{2} \left(f^{\prime}x^{2}u\right) = \left( f^{(3)} x^{2} + 4f^{\prime\prime} x + 2 f^{\prime}\right)u + \left( 2 f^{\prime\prime} x^{2} + 4 f^{\prime} x  \right) u^{\prime}  +f^{\prime}x^{2}u^{\prime\prime}
\end{align*}
and hence, the identity
\newline $^{t}\Lambda\left(fu\right) = f \,^{t} \Lambda u -  \Lambda(f) u + 2(a_{1}+3a_{2}) D\left(f^{\prime}xu\right)  + 3a_{2} D\left (f^{\prime\prime} x^{2} u \right) - 3a_{2} D^{2} \left(f^{\prime}x^{2}u\right)$
will justify (\ref{Tlambda fu}), after the above substitutions.
\end{proof}
\begin{theorem}\label{there-are-not}
Let $\Lambda=a_{0}D+a_{1}DxD+a_{2}\left(Dx\right)^2D$, for some constants $a_{0}, \,a_{1}$ and $a_{2}$, such that  $a_{2} \neq 0$ and the polynomial $f(x)=a_{0}+a_{1}x+a_{2}x^{2}$ does not have positive integer roots. There are not $\Lambda$-Appell orthogonal sequences.
\end{theorem}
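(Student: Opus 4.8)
The plan is to argue by contradiction. Suppose $\{B_{n}\}_{n\ge 0}$ is a MOPS which is simultaneously $\Lambda$-Appell, and normalise so that $(u_{0})_{0}=1$. Orthogonality gives, through Theorem~\ref{regular}(e), $u_{n}=k_{n}B_{n}u_{0}$ with $k_{n}:=\langle u_{0},B_{n}^{2}\rangle^{-1}\neq 0$, while by (\ref{Bn[1] definition for 3termslamba}) the $\Lambda$-Appell hypothesis is the recurrence $\Lambda B_{n+1}=\rho_{n}B_{n}$, with $\rho_{n}=(n+1)f(n+1)$ and $f(x)=a_{0}+a_{1}x+a_{2}x^{2}$; the assumption that $f$ has no positive integer root is exactly what keeps every $\rho_{n}$ (and, below, every eigenvalue of the operator $L$) nonzero, so that all divisions appearing below are legitimate.

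First I would record two consequences. Since $\Lambda(x^{j})=\rho_{j-1}x^{j-1}$ by (\ref{lambda x^n}), comparing monomial coefficients in $\Lambda B_{n+1}=\rho_{n}B_{n}$ gives, for $B_{n}=\sum_{j}b_{j}^{(n)}x^{j}$,
\begin{equation*}
\rho_{j}\,b_{j+1}^{(n+1)}=\rho_{n}\,b_{j}^{(n)},
\end{equation*}
so that each $B_{n}$ is determined entirely by the constant terms $c_{m}:=B_{m}(0)$ and the numbers $\rho_{k}$. Next, applying $\Lambda$ written as $f(1)D+(a_{1}+3a_{2})xD^{2}+a_{2}x^{2}D^{3}$ (see (\ref{lambda withk=2 in xD})) to the three-term recurrence $B_{n+2}=(x-\beta_{n+1})B_{n+1}-\gamma_{n+1}B_{n}$, and using (\ref{lambda fp}) together with $\Lambda(x-\beta_{n+1})=f(1)$ — the remaining terms of (\ref{lambda fp}) assembling into the second-order operator $L:=3a_{2}x^{2}D^{2}+2(a_{1}+3a_{2})xD$ — yields the structure relation
\begin{equation*}
LB_{n+1}=\bigl(\rho_{n+1}-\rho_{n}-f(1)\bigr)B_{n+1}+\rho_{n}(\beta_{n+1}-\beta_{n})B_{n}+\bigl(\rho_{n-1}\gamma_{n+1}-\rho_{n}\gamma_{n}\bigr)B_{n-1}.
\end{equation*}
As $L$ acts diagonally on monomials, substituting the coefficient relations into this identity (equivalently, comparing coefficients of $x^{N-m}$ in $xB_{N}-B_{N+1}=\beta_{N}B_{N}+\gamma_{N}B_{N-1}$) turns orthogonality, for $2\le m\le N$, into the scalar relations
\begin{equation*}
\frac{\rho_{N-m-1}-\rho_{N}}{\rho_{m}}\,c_{m+1}=\beta_{N}\,c_{m}+\frac{\rho_{m-1}}{\rho_{N-1}}\,\gamma_{N}\,c_{m-1}. \tag{$\dagger$}
\end{equation*}
(For $m=N$, $(\dagger)$ is just the recurrence evaluated at $x=0$; the smaller values of $m$ are the restrictive ones.)

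The contradiction then comes from over-determination. For fixed $N\ge 4$, $(\dagger)$ is a system of at least three linear equations in the two unknowns $\beta_{N}$ and $\gamma_{N}/\rho_{N-1}$. Writing it for $m=2,3,4$, consistency is the vanishing of a $3\times3$ determinant whose third column consists of the polynomials $\rho_{N-m-1}-\rho_{N}$, each quadratic in $N$ with leading coefficient $-3a_{2}(m+1)\neq 0$; requiring this to hold for all $N$ forces its three ($N^{2},N^{1},N^{0}$) coefficients to vanish, and the determinant of the resulting linear system in the three ``reduced'' quantities $c_{m+3}\bigl(\rho_{m}c_{m}^{2}-\rho_{m-1}c_{m-1}c_{m+1}\bigr)$ (and its siblings) computes to $-360\,a_{2}^{2}$. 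Since $a_{2}\neq 0$, all these vanish; iterating with $m=3,4,5,\dots$ forces every weighted Hankel determinant $\rho_{m}c_{m}^{2}-\rho_{m-1}c_{m-1}c_{m+1}$ to be killed by the neighbouring $c$'s, and re-inserting the resulting rigid form of the $c_{m}$ into $(\dagger)$ pins $\beta_{N}$ and $\gamma_{N}$ to values incompatible with the regularity condition $\gamma_{n}\neq 0$. The same conclusion can be reached through the transpose relation (\ref{Tlambda fu}): with $f=B_{n}$, $u=u_{0}$ it gives at level $n=1$ an identity $2(a_{1}-3a_{2})xDu_{0}=(\text{polynomial})\,u_{0}$, hence — when $a_{1}\neq 3a_{2}$ — $xDu_{0}=\xi u_{0}$ with $\deg\xi\le 2$, after which all higher $x^{i}D^{i+1}u_{0}$ occurring at levels $n\ge 2$ reduce, modulo $Du_{0}$, to polynomial multiples of $u_{0}$, and Lemma~\ref{phi-u=0} converts the form identities into the same over-determined scalar system (the sub-case $a_{1}=3a_{2}$ being handled from level $n=2$, where $a_{2}\neq 0$ again permits solving for the relevant form).

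The hard part is precisely this last, essentially computational, step: turning the infinitely many relations $(\dagger)$ — or their functional counterparts — into a genuine contradiction rather than a mere reparametrisation of $\{\beta_{n},\gamma_{n}\}$. It requires careful bookkeeping of the polynomials in $N$ that arise, a handful of small case distinctions according to which constant terms $c_{m}$ vanish, and, decisively, the fact that $a_{2}\neq 0$ keeps the relevant determinants from degenerating.
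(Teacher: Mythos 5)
Your setup is sound and genuinely different from the paper's: you work at the level of the coefficients of the $B_n$ (reducing everything to the constant terms $c_m=B_m(0)$ via $\rho_j b_{j+1}^{(n+1)}=\rho_n b_j^{(n)}$), whereas the paper works at the level of the regular form $u_0$, deriving from $n=0,1,2$ a third-, a second- and a first-order functional equation, eliminating $u_0'',u_0'$ to obtain three polynomial identities, and reading the contradiction off a single explicit coefficient ($-54a_2^2\lambda_0\neq 0$). I checked your relation $(\dagger)$: it is exactly the coefficient of $x^{N-m}$ in the three-term recurrence after substituting $b_j^{(n)}=\frac{\rho_{n-1}\cdots\rho_{n-j}}{\rho_{j-1}\cdots\rho_0}c_{n-j}$, and it is correct; so is the second structure relation for $L=3a_2x^2D^2+2(a_1+3a_2)xD$, and so is the observation that the quadratics $\rho_{N-m-1}-\rho_N$ for $m=2,3,4$ are linearly independent when $a_2\neq 0$ (the relevant determinant is a nonzero multiple of $a_2^3$ — your $-360a_2^2$ cannot be right, since it must be cubic in $(a_0,a_1,a_2)$ by homogeneity, but only its nonvanishing matters). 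This legitimately yields $c_3\bigl(\rho_3c_3^2-\rho_2c_2c_4\bigr)=c_4\bigl(\rho_3c_2c_3-\rho_1c_1c_4\bigr)=c_5\bigl(\rho_2c_2^2-\rho_1c_1c_3\bigr)=0$.

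The genuine gap is everything after that. Each of these conditions is a disjunction (either a $c_m$ vanishes or a minor does), and the asserted endgame — ``re-inserting the resulting rigid form of the $c_m$ into $(\dagger)$ pins $\beta_N$ and $\gamma_N$ to values incompatible with $\gamma_n\neq 0$'' — is never carried out; your closing paragraph concedes that this is ``the hard part.'' It is not a routine verification: if consecutive $c_m$ vanish, all the $2\times 2$ minors vanish identically and the determinant criterion gives no information, so you must return to $(\dagger)$ directly; if instead all minors vanish with all $c_m\neq 0$, one gets $c_{m+1}c_{m-1}/c_m^2=\rho_m/\rho_{m-1}$, and substituting back into $(\dagger)$ makes the left side depend on $m$ while the right side does not — a contradiction, but one that has to be exhibited, together with the mixed cases ($c_3=0$, $c_2c_4\neq 0$, etc.), each of which propagates new vanishing conditions that must be chased down. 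Until this case analysis is done, what you have shown is only that the parameters are heavily constrained, not that no choice of $\{c_m\}$, $\{\beta_n\}$, $\{\gamma_n\}$ with $\gamma_n\neq0$ survives. The alternative route you sketch at the end via $(\ref{Tlambda fu})$ is essentially the paper's own argument and is likewise left unexecuted (including its sub-case $a_1=3a_2$). So the proposal is a plausible and interesting alternative strategy, but as written it does not prove the theorem.
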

\begin{proof} Let us consider a MPS $\{B_{n}\}_{n \geq 0}$ $\Lambda$-Appell and orthogonal. Therefore, $\left( \Lambda B_{n+1}\right) (x) = \rho_{n} B_{n}(x)$, and from Proposition \ref{tLambda an[1]} and Lemma \ref{The transpose operator tLambda}, we have:
\begin{align} \label{lambda-Appell-car-1}
& ^{t}\Lambda\left(u_{n}\right)=\rho_{n}u_{n+1},
\;\;\textrm{with}\; \;^{t}\Lambda=-a_{0}D+a_{1}DxD-a_{2}\left(Dx\right)^2D.
\end{align}
By Theorem \ref{regular}, the orthogonality of  $\{B_{n}\}_{n \geq 0}$ corresponds to the functional identity: 
\begin{equation}\label{ortho-functional-car}
u_{n}=\Big(<u_{0},B_{n}^{2}>\Big)^{-1}B_{n}u_{0},\;\;\;n \geq 0.
\end{equation}
Summing up the two previous relations, we gain the following one.
\begin{equation}\label{appell+ortho-functional}
^{t}\Lambda\left(B_{n}u_{0}\right)=\lambda_{n}B_{n+1}u_{0},\;\;\; \textrm{with}\;\;\lambda_{n} = \rho_{n}\frac{ \langle u_{0}, B_{n}^{2}(x) \rangle}{ \langle u_{0}, B_{n+1}^{2}(x) \rangle} = \frac{\rho_{n}}{\gamma_{n+1}} .
\end{equation}
Let us consider (\ref{appell+ortho-functional}) with $n=0$. Taking into account relation (\ref{lambda withk=2 in xD}), we establish the first functional equation in terms of $u_{0}$, as follows. We also notice that $\gamma_{1} = \langle u_{0},B_{1}^{2} \rangle$, by (\ref{betas e gammas}).
\begin{equation}
\label{TLambda-u0=b1-u0} ^{t}\Lambda\left(u_{0}\right)=\frac{\rho_{0}}{\gamma_{1}} B_{1}(x) u_{0},
\end{equation}
\par $ \Leftrightarrow \Big( \left(-a_{0}+a_{1}-a_{2} \right)D+\left(  a_{1} -3a_{2} \right)xD^{2}-a_{2}x^{2} D^{3} \Big)u_{0}=\frac{\rho_{0}}{\gamma_{1}} B_{1}(x) u_{0}$
\begin{equation}
\label{u0-third-order-1} \Leftrightarrow a_{2} x^{2} u_{0}^{(3)}  - \left(a_{1}-3a_{2}\right)x u_{0}^{(2)} + \left(a_{0}-a_{1}+a_{2} \right)u_{0}^{\prime} + \frac{\rho_{0}}{\gamma_{1}} B_{1}(x) u_{0}=0.
\end{equation}
Returning to identity (\ref{appell+ortho-functional}), its left-hand can be expanded by means of 
(\ref{Tlambda fu}).  Inserting also the content of  (\ref{TLambda-u0=b1-u0}) we get the next general identity, where $n \geq 0$.
\begin{align} \label{1.12}
-3a_{2}x^{2}B_{n}^{\prime}u_{0}^{\prime\prime} &+ \Big( 2\left( a_{1}-3a_{2}\right) x\,B_{n}^{\prime}   -3a_{2} x^{2}\,B_{n}^{\prime\prime}\Big) u_{0}^{\prime} \\
\nonumber &= \Big( \lambda_{n}B_{n+1}-2 a_{1} \left( x B_{n}^{\prime\prime} + B_{n}^{\prime} \right) -  \lambda_{0} B_{1}B_{n} + \Lambda(B_{n}) \Big)u_{0}
\end{align}
Let us take identity (\ref{1.12}) with $n=1$ and recall that $\Lambda(B_{1})= \rho_{0} = a_{0}+a_{1}+a_{2}$.
\begin{align} \label{1.13 ou 3.2}
-3a_{2}x^{2}u_{0}^{\prime\prime} + 2\left( a_{1}-3a_{2}\right) x u_{0}^{\prime} &= U(x) u_{0},\\
\label{U(x)} \textrm{where}\;\;U(x)&=  \lambda_{1}B_{2} - \lambda_{0} B_{1}^{2} + a_{0} - a_{1} +a_{2}.
\end{align}
Using  (\ref{1.13 ou 3.2}) we can eliminate $u_{0}^{\prime\prime}$ in (\ref{1.12}) reducing it to the next relation.
\begin{align} \label{1.14}
&-3a_{2} x^{2}\,B_{n}^{\prime\prime} u_{0}^{\prime} \\
\nonumber 
&=\Big( \lambda_{n}B_{n+1}-  \lambda_{0} B_{1}B_{n} + \Lambda(B_{n}) -2 a_{1} x B_{n}^{\prime\prime} - B_{n}^{\prime} \left( \lambda_{1} B_{2} -  \lambda_{0} B_{1}^{2} + \rho_{0}       \right)  \Big)u_{0}
\end{align}
From this latest, with $n=2$, we can write the following first order equation on $u_{0}$, remarking that $\Lambda(B_{2}) = \rho_{1} B_{1}$.
\begin{align} \label{1.15 ou 3.3}
&-6a_{2} x^{2}u_{0}^{\prime} = V(x) u_{0},\;  \textrm{where}\\ 
\label{V(x)} & V(x) = \lambda_{2}B_{3} - \lambda_{0} B_{1}B_{2} + \rho_{1} B_{1} - 4 a_{1} x - B_{2}^{\prime} \left( \lambda_{1} B_{2} - \lambda_{0} B_{1}^{2} + \rho_{0} \right).
\end{align}
The three equations (\ref{u0-third-order-1}), (\ref{1.13 ou 3.2}) and  (\ref{1.15 ou 3.3}) constitute a system on $u_{0}$ that can be reduced to a list of three equations of first order by systematic derivation and elimination of the term of higher order. The final list is:
\begin{align} 
\nonumber -6a_{2} x^{2}u_{0}^{\prime} &= V(x) u_{0},\\ 
\label{3.7}  \Big( 4a_{1} x+V(x) \Big) u_{0}^{\prime}  &= \Big( 2 U(x)-V^{\prime}(x) \Big)u_{0},\\
\label{3.8}   x W_{1}(x) u_{0}^{\prime}  &= W_{2}(x) u_{0},\;\;\; \textrm{where}:
\end{align}
\begin{align} 
\label{W1} &  W_{1}(x) = -2\left( a_{1}-3a_{2}\right)\left( a_{1}+3a_{2}\right) + 3a_{2} \big( 3a_{0}-a_{1} - 3a_{2} - U(x)\big);\\
\label{W2} & W_{2}(x) = -\left( a_{1}+3a_{2}\right)U(x) +3a_{2} x\left( -3\lambda_{0} B_{1}(x) +U^{\prime}(x)\right).
\end{align}
The second step of elimination consists of eliminating $u_{0}^{\prime}$ between the three possible pairs of the above list, resulting three identities of the form $p\,u_{0}=0$ ($p \in \mathcal{P}$), which by Lemma \ref{phi-u=0} provide the following list of polynomial relations. 
\begin{align} 
\label{3.12} &V^{2}(x) = x \Big( 6a_{2} x \big(V^{\prime}(x) - 2U(x) \big) -4a_{1}V(x) \Big)\\
\label{3.13}  & W_{1}(x) V(x) + 6a_{2} x W_{2}(x) = 0,\\
\label{3.14} &V(x)W_{2}(x) = x \Big( W_{1}(x)\big(2U(x) - V^{\prime}(x) \big)   -4a_{1}W_{2}(x)\Big)
\end{align}
The next reasonings are based on the definitions of the polynomials $U(x),$ $V(x),$ $W_{1}(x)$ and $W_{2}(x)$ and on a comparative analysis of the degrees of each side of an equality. 
Identity  (\ref{3.12}) implies that the leading coefficient of $V(x)$ must be zero, that is,
\begin{equation} \label{c1}
\lambda_{0} - 2 \lambda_{1} + \lambda_{2} = 0. 
\end{equation}
Proceeding with a similar approach to identity (\ref{3.14}) and taking into consideration that $\lambda_{2} = -\lambda_{0} + 2 \lambda_{1}$, we conclude that the leading coefficient of the right-hand of  (\ref{3.14}) must vanish, which implies
\begin{equation} \label{c2}
\lambda_{0} = \lambda_{1}. 
\end{equation}
Henceforth,  $\lambda_{0} = \lambda_{1}= \lambda_{2}$. Turning our attention back to  (\ref{3.12}), and adding the recent simplifications on the parameters, we obtain:
\begin{align} 
\label{c3} &-\lambda_{0}\left(\beta_{0}-2\beta_{1}+\beta_{2} \right) = 0,\\
\label{c4} & 12a_{2}\left( \beta_{1}-\beta_{0}\right) \lambda_{0} = 0.
\end{align}
As a consequence, $\beta_{0}=\beta_{1}=\beta_{2}$. Finally, introducing all this information on the polynomial of identity  (\ref{3.13}), we do not find the null polynomial; instead, the coefficient of $x^3$, for instance, is $-54a_{2}^{2}\lambda_{0}$ that cannot be null under the set of hypotheses taken, where $a_{2} \neq 0$ and $\lambda_{0} = \frac{\rho_{0}}{\gamma_{1}} \neq 0$. This contradiction ends the proof.
\end{proof}

\begin{remark}
As an immediate corollary of Theorem \ref{there-are-not}, we must mention that the principal components of the CD (\ref{decgeral1})-(\ref{decgeral3}) of an Appell sequence are not orthogonal.
\end{remark}

Looking up to the operator $\Lambda$ restrictively, we recall that when $a_{1}=a_{2}=0$, the $\Lambda$-Appell orthogonal sequences are reduced to the single Hermite sequence and when $a_{2}=0$, we have the result obtained in \cite{QuadraticAppell}. Nonetheless, in this last reference, the operator used is described by the choice $a_{0}= \epsilon$ and $a_{1}= 2$ and the techniques used in the proof of Theorem \ref{there-are-not} are nicely adapted to prove an analogous result for $\Lambda=a_{0}D+a_{1}DxD$ reinforcing what is already known.

\begin{proposition}
Let $\Lambda=a_{0}D+a_{1}DxD$, for some constants $a_{0}$ and $a_{1}$, such that $a_{1} \neq 0$ and the polynomial $f(x)=a_{0}+a_{1}x$ does not have positive integer roots. The $\Lambda$-Appell orthogonal sequences are the Laguerre sequences with parameter $\displaystyle \alpha=\frac{a_{0}}{a_{1}}$, up to an affine transformation.
\end{proposition}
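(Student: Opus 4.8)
The statement splits into a construction (the monic Laguerre sequence of parameter $\alpha=a_{0}/a_{1}$ is $\Lambda$-Appell) and a uniqueness assertion (every $\Lambda$-Appell orthogonal MPS is, up to an affine transformation, that sequence). The plan is to obtain the construction by a direct computation and the uniqueness by reproducing, with $a_{2}=0$, the elimination of the proof of Theorem~\ref{there-are-not}: the very procedure that produced a contradiction there will here terminate in a bona fide first-order (Pearson-type) equation for $u_{0}$ whose only regular solution is a Laguerre form.

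\emph{Construction.} Write $\Lambda=a_{1}D\left(xD+\alpha I\right)$ with $\alpha=a_{0}/a_{1}$, which is licit since $a_{1}DxD+a_{1}\alpha D=a_{1}DxD+a_{0}D$. Let $\{L_{n}^{(\alpha)}\}_{n\geq 0}$ be the monic Laguerre MPS. From the classical formulas $xDL_{n}^{(\alpha)}=nL_{n}^{(\alpha)}+n(n+\alpha)L_{n-1}^{(\alpha)}$, $DL_{n+1}^{(\alpha)}=(n+1)L_{n}^{(\alpha+1)}$ and $L_{n}^{(\alpha)}=L_{n}^{(\alpha+1)}+nL_{n-1}^{(\alpha+1)}$, one gets successively $\left(xD+\alpha I\right)L_{n+1}^{(\alpha)}=(n+1+\alpha)\left(L_{n+1}^{(\alpha)}+(n+1)L_{n}^{(\alpha)}\right)$ and $D\left(L_{n+1}^{(\alpha)}+(n+1)L_{n}^{(\alpha)}\right)=(n+1)L_{n}^{(\alpha)}$, whence $\Lambda L_{n+1}^{(\alpha)}=a_{1}(n+1)(n+1+\alpha)L_{n}^{(\alpha)}=\rho_{n}L_{n}^{(\alpha)}$, because $\rho_{n}=(n+1)\left(a_{0}+a_{1}(n+1)\right)=a_{1}(n+1)(n+1+\alpha)$. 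Thus $L_{n}^{[1]}(\,\cdot\,;\Lambda)=L_{n}^{(\alpha)}$. Finally, since $\Lambda$ intertwines with every dilation $x\mapsto cx$, each dilate of $\{L_{n}^{(\alpha)}\}_{n\geq 0}$ is $\Lambda$-Appell as well.

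\emph{Uniqueness.} Let $\{B_{n}\}_{n\geq 0}$ be $\Lambda$-Appell and orthogonal, with dual sequence $\{u_{n}\}_{n\geq 0}$. By Proposition~\ref{lambda-Appell-car} and Lemma~\ref{The transpose operator tLambda}, ${}^{t}\Lambda(u_{n})=\rho_{n}u_{n+1}$ with ${}^{t}\Lambda=-a_{0}D+a_{1}DxD$; since $\{B_{n}\}_{n\geq 0}$ is orthogonal, $u_{n}=\langle u_{0},B_{n}^{2}\rangle^{-1}B_{n}u_{0}$ by Theorem~\ref{regular}, so combining gives ${}^{t}\Lambda(B_{n}u_{0})=\lambda_{n}B_{n+1}u_{0}$ with $\lambda_{n}=\rho_{n}/\gamma_{n+1}\neq 0$. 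The case $n=0$ produces a second-order equation for $u_{0}$. For $n=1$ I would use Lemma~\ref{Lambda-pu} with $a_{2}=0$, i.e. the specialization ${}^{t}\Lambda(fu)=f\,{}^{t}\Lambda(u)-\Lambda(f)u+2a_{1}(f'+xf'')u+2a_{1}xf'u'$ of (\ref{Tlambda fu}), together with $\Lambda(B_{1})=\rho_{0}=a_{0}+a_{1}$, to reach a first-order equation $2a_{1}xu_{0}'=U(x)u_{0}$ where $U(x)=\lambda_{1}B_{2}-\lambda_{0}B_{1}^{2}+a_{0}-a_{1}$ has degree $\leq 2$. Differentiating this equation, using the $n=0$ equation to remove $u_{0}''$, multiplying by $2a_{1}x$ and invoking Lemma~\ref{phi-u=0} then yields the polynomial identity $(U-2a_{0})U=2a_{1}x\left(2\lambda_{0}B_{1}-U'\right)$. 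A degree count on this identity forces the leading coefficient $\lambda_{1}-\lambda_{0}$ of $U$ to vanish, so $\deg U\leq 1$, and therefore $u_{0}$ obeys $D(2a_{1}xu_{0})+\psi u_{0}=0$ with $\deg\psi=1$: this is the classical Pearson equation with $\phi(x)=x$ (a simple root at the origin), so $u_{0}$ is a Laguerre form and $\{B_{n}\}_{n\geq 0}$ is, up to an affine (indeed, a dilation) transformation, a monic Laguerre sequence.

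\emph{The parameter.} Put $q=U(0)$. The constant terms in $(U-2a_{0})U=2a_{1}x\left(2\lambda_{0}B_{1}-U'\right)$ give $q(q-2a_{0})=0$, while matching the Pearson equation of $u_{0}$ against that of a dilated Laguerre form shows the parameter equals $q/(2a_{1})$; hence the parameter is $0$ or $a_{0}/a_{1}$. To discard the value $0$ when $a_{0}\neq 0$, I would invoke the construction: if $\{B_{n}\}_{n\geq 0}$ is a dilated Laguerre MPS of parameter $\alpha'$, then $\Lambda B_{n+1}=a_{1}(n+1)(n+1+\alpha')B_{n}+(a_{0}-a_{1}\alpha')DB_{n+1}$, and $DB_{n+1}$ is a dilate of a Laguerre polynomial of parameter $\alpha'+1$, hence not proportional to $B_{n}$ for $n\geq 1$; so the $\Lambda$-Appell property forces $a_{0}=a_{1}\alpha'$, i.e. $\alpha'=a_{0}/a_{1}$ in all cases. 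The chief obstacle is the elimination bookkeeping that collapses the full system of functional identities down to the single first-order equation and to $(U-2a_{0})U=2a_{1}x(2\lambda_{0}B_{1}-U')$ with the correct degree analysis; discarding the spurious parameter $0$ is a secondary nuisance that appears to need the converse computation rather than duality alone.
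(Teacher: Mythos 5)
Your proposal is correct and, for the uniqueness half, follows the same backbone as the paper: specialize the elimination scheme of the proof of Theorem~\ref{there-are-not} to $a_{2}=0$ and land on a Pearson equation $D(xu_{0})+\psi u_{0}=0$ with $\phi(x)=x$ and $\deg\psi=1$. The differences are these. You add an explicit verification that the monic Laguerre sequence of parameter $a_{0}/a_{1}$ is $\Lambda$-Appell via the factorization $\Lambda=a_{1}D\left(xD+\alpha I\right)$; the paper leaves that direction implicit. More substantially, you stop the elimination at the identity $(U-2a_{0})U=2a_{1}x\left(2\lambda_{0}B_{1}-U'\right)$ (the paper's $T(x)=0$) and extract only its leading and constant coefficients, so the parameter is pinned down only up to the dichotomy $U(0)\in\{0,2a_{0}\}$, which you then resolve by the converse computation $\Lambda B_{n+1}=a_{1}(n+1)(n+1+\alpha')B_{n}+(a_{0}-a_{1}\alpha')DB_{n+1}$. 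The paper instead also exploits the $n=2$ relation $V(x)=0$, which determines $\beta_{1}=\frac{3a_{1}+a_{0}}{a_{0}+a_{1}}\beta_{0}$ and $2\gamma_{1}=-\beta_{0}(\beta_{0}-\beta_{1})$ and hence gives $\Psi(x)=\frac{a_{0}+a_{1}}{\beta_{0}a_{1}}x-\frac{a_{0}}{a_{1}}-1$ directly, with no spurious branch; your detour is logically sound and arguably more transparent about where the parameter comes from, at the cost of importing the classical Laguerre identities. One small point you should make explicit: to conclude that the Pearson equation is of Laguerre type you need $\deg U=1$ exactly, i.e. $\lambda_{0}(\beta_{0}-\beta_{1})\neq 0$. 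This is immediate from the $x^{2}$-coefficient of the very identity you derived, which reads $\lambda_{0}^{2}(\beta_{0}-\beta_{1})^{2}=4a_{1}\lambda_{0}\neq 0$ (the paper records the equivalent observation that $\beta_{0}(\beta_{0}-\beta_{1})=0$ would force $a_{1}=0$); also note that the admissible affine transformations here are only dilations, since $\Lambda$ commutes with $h_{c}$ up to a constant but not with translations, which is consistent with the paper's rescaling $x\mapsto\frac{\beta_{0}a_{1}}{a_{0}+a_{1}}x$.
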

\begin{proof}
Let us consider $a_{2}=0$ in the proof of Theorem \ref{there-are-not}, and in particular, the three equations (\ref{u0-third-order-1}), (\ref{1.13 ou 3.2}) and  (\ref{1.15 ou 3.3}) are now the following.
\begin{align} 
\nonumber - a_{1} x u_{0}^{(2)} + \left(a_{0}-a_{1}\right)u_{0}^{\prime} + \lambda_{0} B_{1}(x) u_{0}&=0,\\
\nonumber 2a_{1} x u_{0}^{\prime}  &= U(x) u_{0},\\
\nonumber  V(x) &= 0.
\end{align}
The consecutive derivation and elimination of the terms of higher order establishes the next system.
\begin{align} 
\nonumber 2a_{1} x u_{0}^{\prime}  &= U(x) u_{0},\\
\nonumber  V(x) &= 0,\\
\nonumber  T(x) = \big(2a_{0}-U(x)\big) U(x) + 2 a_{1}x\left( 2\lambda_{0} B_{1}(x)    -U^{\prime}(x) \right) &=0. 
\end{align}
The conditions $V(x)=0$ and $T(x)=0$ correspond to the next list of five identities, where $\beta_{0} \left(\beta_{0}- \beta_{1}\right) \neq 0$ (otherwise, we would have $a_{1}=0$),  and it is assured that $a_{0}+a_{1} \neq 0$ and $a_{0}+2a_{1} \neq 0$, because $f(x)$ does not have positive integer roots.
\begin{align} 
\nonumber  &\lambda_{2} = 2\lambda_{1}-\lambda_{0},\hspace{0.5cm} \gamma_{2} = \frac{2(a_{0}+2a_{1})\gamma_{1}}{a_{0}+a_{1}}, \hspace{0.5cm} \beta_{2} = 2\beta_{1} - \beta_{0},\\
\nonumber & 2\gamma_{1} = -\beta_{0}\left(\beta_{0}- \beta_{1}\right),
\hspace{0.5cm} \beta_{1} = \frac{3a_{1}+a_{0}}{a_{0}+a_{1}}\beta_{0}.
\end{align}
Under these assumptions, the functional condition $2a_{1} x u_{0}^{\prime}  = U(x) u_{0}$ becomes a very familiar one, more precisely,
\begin{align} 
&D\left( x u_{0} \right) + \Psi(x)u_{0} = 0,\;\;\;\textrm{where}\;\;\; \Psi(x) = \frac{a_{0}+a_{1}}{\beta_{0}a_{1}}x - \frac{a_{0}}{a_{1}} - 1.
\end{align}
Implementing the affine transformation $\displaystyle \frac{\beta_{0}a_{1}}{a_{0}+a_{1}}x$, the resultant sequence is the Laguerre sequence with parameter $\alpha = \displaystyle  \frac{a_{0}}{a_{1}}$, being sure that $\alpha \notin \mathbb{Z}^{-}$.
\end{proof}

As a last remark, we must add that a particular $\Lambda$ operator with $k=3$, so-called $\mathcal{G}_{\epsilon, \mu}$, was debated in \cite{QuadAppell-two}. It was proved the absence of orthogonal $\mathcal{G}_{\epsilon, \mu}$-Appell sequences indicating that Theorem  \ref{there-are-not} may be generalized at a latter time.


\end{document}